\newcommand{\ra}{\rightarrow}
\newcommand\var{\mathsf{Var}}
\newcommand\Z{\mathbb Z}
\newcommand\RR{\mathcal R}
\newtheorem{thm}{Theorem}[section]
\newtheorem{prop}[thm]{Proposition}
\newtheorem{lem}[thm]{Lemma}
\newtheorem{claim}[thm]{Claim}
\newtheorem{cor}[thm]{Corollary}
\theoremstyle{definition}
\newtheorem{defn}[thm]{Definition}
\newtheorem{rmk}[thm]{Remark}
\newtheorem{example}[thm]{Example}
\begin{document}

\tikzset{->-/.style={decoration={
  markings,
  mark=at position #1 with {\arrow{>}}},postaction={decorate}}}

\title[Sub-dominant cogrowth behaviour and amenability]
{
Sub-dominant cogrowth behaviour
and the viability of deciding amenability 
numerically}

\author[M. Elder]{Murray Elder}
\address{School of Mathematical and Physical Sciences,
The University of Newcastle,
Callaghan NSW 2308, Australia}
\email{Murray.Elder@newcastle.edu.au}

\author[C. Rogers]{Cameron Rogers}
\address{School of Mathematical and Physical Sciences,
The University of Newcastle,
Callaghan NSW 2308, Australia}
\email{Cameron.Rogers@uon.edu.au}

\keywords{amenable group; cogrowth function; F\o lner function; Kesten's criterion; return probability; Metropolis algorithm; R. Thompson's group $F$} 
\subjclass[2010]{20F69, 20F65, 05A15, 60J20}
\date{\today}
\thanks{Research supported by Australian Research Council grant  FT110100178}

\begin{abstract}
We critically analyse a recent numerical method due to the first author, Rechnitzer and van Rensburg, which attempts to detect amenability or non-amenability in a finitely generated group by numerically estimating its asymptotic cogrowth rate.
We identify two potential sources of error. We then propose a modification of the method that enables it to easily compute surprisingly accurate estimates for initial terms of the cogrowth sequence.
\end{abstract}

\maketitle

\section{Introduction}

Researchers  studying 
the amenability Thompson's group $F$ will be familiar with a distrust of experimental methods applied to this problem.  
Part of this scepticism stems from the fact that (if it is amenable) $F$ is known to have a very quickly growing \emph{F\o lner function} \cite{Moore-Folner}.
However, experimental algorithms investigating amenability are rarely based on F\o lner's criteria directly, and 
to date
no identification is made in the literature of a mechanism by which a quickly growing F\o lner function could interfere with a given experimental method. 

In this paper we identify such a mechanism for a recent algorithm proposed by first author, A. Rechnitzer, and E. 
J. Janse  van Rensburg \cite{ERR}, which was designed  to experimentally detect amenability via the Grigorchuk-Cohen 
characterisation  in terms 
of the cogrowth function. We will refer to this as the ERR algorithm in the sequel.

We show that, in the ERR algorithm, estimates of the asymptotic cogrowth rate 
are compromised by sub-dominant behaviour in the reduced-cogrowth function. 

However, 
even though sub-dominant behaviour in the cogrowth function may interfere with estimates of the asymptotic growth rate, the ERR algorithm  can still be used to estimate other properties of the cogrowth function to high levels of accuracy.
In particular we are able re-purpose the algorithm to quickly estimate initial values of the cogrowth function even for groups for which the determination of the asymptotic growth rate is not 
possible (for example groups with unsolvable word problem).

The present work  started out as an independent verification by the second author 
of the experimental results in \cite{ERR}, as part of his PhD research.
More details can be found in \cite{CamPhD}.

The article is organised as follows. 
In Section~\ref{sec:prelim} we give the necessary background on amenability, random walks and cogrowth, followed by a summary of previous experimental work on the amenability of $F$. In Section~\ref{sec:R} a function quantifying 
the sub-dominant properties of the reduced-cogrowth function is defined.
In Section~\ref{sec:ERRsection} the ERR algorithm is summarised, followed by an analysis of two types of pathological behaviour in Section~\ref{sec:pathological_behaviour}.  The first of these is easily handled, while the second  is shown to depend on sub-dominant terms in the reduced-cogrowth function. 
In Section~\ref{sec:appropriation} the ERR method is modified to provide estimates of initial cogrowth values.  Using this the first 2000 terms for the cogrowth function of Thompson's group $F$ are estimated.

\section{Preliminaries}\label{sec:prelim}

We begin with a definition of terms and a quick survey of experimental work done on estimating amenability. 

\subsection{Characterisations of amenability}

The following characterisation of amenability is due to Grigorchuk \cite{grigorchuk1980Cogrowth} and Cohen \cite{cohen1982cogrowth}.  A shorter proof of the equivalence of this criteria with amenability was provided by Szwarc \cite{Szwarc_on_grig_cohen}.
\begin{defn}
\label{defn:amenabilityCogrowth}
Let $G$ be a finitely generated non-free group with symmetric generating set $S$. 
 Let $c_n$ denote the number of freely reduced words of length $n$
over $S$
which are equal to the identity in $G$.
Then $G$ is amenable if and only if 
$$\limsup_{n\rightarrow\infty} c_n^{1/n}=|S|-1.$$

Equivalently, let $d_n$ denote the number of  words (reduced and unreduced) of length $n$ over $S$
which are equal to the identity. Then $G$ is amenable if and only if 
$$\limsup_{n\rightarrow\infty} d_n^{1/n}=|S|.$$

The function $n\mapsto c_n$ is called the {\em reduced-cogrowth function} for $G$ with respect to $S$, and $n\mapsto d_n$ the {\em cogrowth function}.
\end{defn}

Kesten's criteria for amenability is given in terms of the probability of a random walk on the group returning to 
its starting point.

\begin{defn}\label{defn:amenabilityKesten}
Let $G$ be a finitely generated group, and let $\mu$ be a symmetric 
measure on $G$.  The random walk motivated by $\mu$
is a Markov chain on the group starting at the identity where the probability of moving from $x$ to $y$ is $\mu(x^{-1}y)$.  
Note the distribution after $n$ steps is given by the $n$-fold 
convolution power of $\mu$, which we denote as $\mu_n$. That is, $\mu_n(g)$ is the probability that an $n$-step walk starting at $e$ ends at $g$.
By Kesten's criteria \cite{Kesten} a group is amenable 
if and only if $$\limsup_{n\rightarrow\infty} (\mu_n(e))^{1/n}=1.$$
\end{defn}

Pittet and Saloff-Coste  proved that the asymptotic 
decay rate of the probability of return function is independent  
of measure chosen, up to the usual equivalence \cite{stabilityRandomWalk}.
For finitely generated groups  we can choose the 
random walk motivated by the uniform probability measure on a finite generating set.  This random walk is called a \emph{simple 
random walk} and corresponds exactly with a random walk on the Cayley graph.
For this measure the probability of return is given by 
\begin{equation}\label{eqn:mu-d} 
\mu_n(e) =\frac{d_n}{|S|^n},\end{equation}
where the (reduced and non-reduced) cogrowth terms $d_n$ are calculated with 
respect to the support of the measure.
Thus the cogrowth
function arises from a special case of return probabilities.

F\o lner's characterisation of amenability \cite{Folner} can be phrased in several 
ways.  Here we give the definition for finitely generated
groups.

\begin{defn}
\label{defn:amenabilityFolner}
Let $G$ be a group with finite generating set $S$.  For each 
finite subset $F\subseteq G$, we denote by $|F|$ the number of 
elements in $F$.  The {\em boundary} of a finite set $F$ is defined to be 
$$\partial F=\lbrace
g\in G\;:\;g\notin F,  gs\in F \text{ for some }s\in S
\rbrace.$$
A finitely generated group $G$ is amenable if and only if there exists 
a sequence of finite subsets $F_n$  such that 
$$
\lim_{n\rightarrow\infty} \frac{\vert \partial F_n \vert}{\vert F_n\vert}
=0.$$
\end{defn}

 Vershik \cite{Vershik-folner-function} defined the following function as a way to quantify how much of the Cayley graph must be considered before sets with a given isoperimetric profile can be found.

\begin{defn}
The F\o lner function of a group is 
$$f(n)=\min\left\lbrace |F|\;:\;\frac{|\partial F|}{|F|}<\frac{1}{n}  \right\rbrace.$$
\end{defn}

Significant literature exists on F\o lner functions.  It is known that 
there exists finitely 
presented amenable groups with F\o lner functions 
growing faster than $n^{n^n}$
(\cite{KrophollerMartino} Corollary~6.3)
and finitely generated groups  (iterated wreath product of  $k$ copies of $\Z$) with F\o lner functions 
growing faster than $\displaystyle n^{n^{\iddots}}$ of height $k$ for arbitrary $k$  \cite{ershler2003isoperimetric}.

\subsection{Experimental work on the amenability of $F$}
Richard Thompson's group $F$ is the group with 
 presentation
 \begin{equation}
 \langle a,b \mid  [ab^{-1},a^{-1}ba],[ab^{-1},a^{-2}ba^2] \rangle
\label{eqn:Fpresentation}
\end{equation}
where $[x,y]=xyx^{-1}y^{-1}$ denotes the commutator of two elements. See for example \cite{CFP} for a more detailed introduction to this group.

Whether or not $F$ is amenable has attracted a large amount of interest, and has so far evaded many different attempts at a proof of both positive and negative answers.

The following is a short summary of experimental work previously done on Thompson's group $F$. 

\begin{itemize}

\item[\cite{ComputationalExplorationsF}]
  Burillo, Cleary and  Wiest 2007.
 The authors randomly choose words and reduce them to a normal form to test if they represent the identity element.  From this they estimate the proportion of words of length $n$  equal to the identity, as a way to compute the asymptotic growth rate of the cogrowth function.

\item[\cite{Arzhantseva}]
  Arzhantseva,  Guba, Lustig, and Pr{\'e}aux 2008.
The authors study the {\em density} or  least upper bound for the average vertex degree  of any finite subgraph of the Cayley graph; an $m$-generated group is amenable if and only if the density of the corresponding Cayley graph is $2m$ (considering inverse edges as distinct). A computer program is run and data is collected on a range of amenable and non-amenable groups. They find a finite subset 
in  $F$ with density $2.89577$ with respect to the $2$ generator presentation above. (To be amenable one would need to find sets whose density approaches $4$). 
Subsequent theoretical work of Belk and Brown gives sets with density approaching $3.5$ \cite{BelkBrown}.

\item[\cite{ElderCogrowthofThompsons}] 
Elder, Rechnitzer and Wong 2012.
Lower bounds on the cogrowth rates of various groups are  obtained by computing the dominant eigenvalue of the adjacency matrix of truncated Cayley graphs. These bounds are  extrapolated to estimate the cogrowth rate.
As a byproduct the first 22 coefficients  of the cogrowth  series are computed exactly.

\item[\cite{Haagerup}]  
Haagerup, Haagerup, and Ramirez-Solano 2015.
Precise lower bounds of certain norms of elements in the group ring of $F$ are computed, and 
	coefficients  of the first 48 terms of the  cogrowth  series are computed exactly.

\item[\cite{ERR}] 
Elder, Rechnitzer and van Rensburg 2015.
The {\em Metropolis Monte Carlo} method from statistical mechanics is adapted to estimate the asymptotic growth rate of the cogrowth function by running  random walks on the set of all trivial words in a group. The results obtained for Thompson's group $F$ suggest it to be non-amenable.
We describe their method in more detail in Section~\ref{sec:ERRsection} below.

\end{itemize}

Justin Moore  \cite{Moore-Folner} (2013) has shown that if $F$ were amenable  then its F\o lner function  would increase
faster than a tower of $n-1$ twos, 
$$2^{2^{2^{\iddots}}}$$ 
This result has been proposed as an obstruction to all computational methods for approximating amenability; a computationally infeasibly large portion of the Cayley graph must be considered before sets with small boundaries can be found.
However, in all but one of the experimental algorithms listed above computing F\o lner sets was not the principle aim.
In order to understand how a bad F\o lner function affects the performance of these methods, we need to understand the connection between convergence properties of the respective limits in the various characterisations of amenability.

\section{Quantifying sub-dominant cogrowth behaviour}
\label{sec:R}
The F\o lner function 
quantifies the rate of convergence of the limit in Definition~\ref{defn:amenabilityFolner}.  We consider the following definitions as an attempt to quantify the rate of convergence of the limits in Definition~\ref{defn:amenabilityCogrowth}.

\begin{defn}\label{defn:R}
Let $G$ be a finitely generated group with symmetric generating set 
$S$.
 Let 
$c_n$ be the number of all reduced
trivial words of length $n$ and let $C=\limsup c_n^{1/n}.$
Define
$$\RR(n)=\min
\left\lbrace k \;:\;\frac{c_{2k+2}}{c_{2k}}>C^2-\frac{1}{n} 
\right\rbrace$$
\end{defn}

Definition \ref{defn:R} uses only even word lengths (and hence $C^2$ instead of $C$).  This is necessary because group presentations with only even length relators have no odd length trivial words.
For this paper we will only consider the function $\RR$ for amenable groups, in which case $C=|S|-1$ except when the group is free (infinite cyclic).

A similar definition may be made for the cogrowth function. 
\begin{defn}\label{defn:Rprime}
For $G$ a finitely generated group with symmetric generating set 
$S$ we may define 
$$\RR'(n)=\min
\left\lbrace k \;:\;\frac{d_{2k+2}}{d_{2k}}>D^2-\frac{1}{n} 
\right\rbrace$$
where
$d_n$ be the number of all (reduced and non-reduced)
trivial words of length $n$ and $D=\limsup c_n^{1/n}.$
\end{defn}

Literature already exists studying the convergence properties of return probabilities, and we suspect that
the function $\RR'$ is a reformulation of the {\em $L^2$-isoperimetric 
function}  \cite{BendikovPittetSauer}.  

\begin{example}
For the trivial group with some finite symmetric generating set $S$ we have $c_0=1, c_k=|S|(|S|-1)^{k-1}$ for $k\geq 1$ so  
$\frac{c_{2k+2}}{c_{2k}}\geq (|S|-1)^2$ and $\RR(n)=0$. 
Similarly since $d_k=|S|^k$ we have
 $\RR(n)=\RR'(n)=0$. 
\end{example}

Aside from the trivial group, it is usually easier to compute $\RR'$ (or its asymptotics) than it is to obtain $\RR$. 
For this reason we first consider $\RR'$ functions for various groups, and then prove that for infinite, amenable, non-free  groups $\RR'$ and $\RR$ have the same asymptotic behaviour.

\begin{example}
For any finite group 
 the rate of growth of $d_n$ is the dominant eigenvalue of the adjacency matrix of the 
 Cayley graph, and some simple analysis shows that $\RR'(n)$ is at most logarithmic in $n$. 
\end{example}

Define $f\precsim g$  if there exist constants $a, b > 0$, such that for $x$ large enough, $f(x) \leq ag(bx)$. Then $f\sim g$ ($f$ and $g$ are asymptotic) if $f\precsim g$ and $g\precsim f$.

Table \ref{tab:differentRn} provides a  sample of amenable groups for which  the asymptotics of $\RR'(n)$,  the F\o lner function and probabilities of return are known  \cite{ershler2003isoperimetric,randomWalkWreathProducts,PittetSCsolvable2003}.

\begin{table}
\begin{center}

\renewcommand{\arraystretch}{2.5}
\begin{tabular}
{
|>{\centering\arraybackslash}p{2.7cm}
|>{\centering\arraybackslash}p{2.7cm}
|>{\centering\arraybackslash}p{3.3cm}
|>{\centering\arraybackslash}p{2.7cm}
|}
\hline
 Example & $\mathcal{F}(n)$ & $\mu_n (e)$  & $\mathcal{R}'(n)$  \\ 
\hline \hline
 trivial  & $\sim$ constant & $\sim$ constant & $\sim$ constant \\ 
\hline 
 $\Z^k$ & $\sim n^{k}$ & $\sim n^{-k/2}$  &  $\sim n$   \\ 
\hline
 $BS(1,N)$ & $\sim e^n$ & $\sim e^{-n^{1/3}}$
& $\sim n^{3/2}$ \\
\hline
 $\Z\wr\Z$  & $n^n$ & $\sim e^{-n^{1/3}(\ln n)^{2/3}}$ & $\sim\ln(n) n^{3/2}$\\
 \hline
  $\Z\wr\Z\wr\dots \wr\Z$ $(d-1)$-fold wreath product  & $n^{n^{n^{\iddots^n}}}$ (tower of $d-1$ $n$'s) & $\sim e^{-n^{\frac{d}{d+2}}(\ln n)^{\frac{2}{d+2}}}$ & $\sim\ln(n) n^{(d+2)/2}$\\
 \hline
 \end{tabular} 
\end{center}
\caption{Comparing asymptotics of the probabilities of return, the F\o lner function $\mathcal{F}$, and $\RR'$ for various 
groups. 
\label{tab:differentRn}}
\end{table}

The results for the asymptotics of $\RR'(n)$ were derived directly from the known  asymptotics for  $\mu_n$. A discussion of these methods  will appear in \cite{CamPhD}. In practice however it proved quicker to guess the asymptotics and then refine using the following method.

\begin{prop}\label{prop:ProvingRn}
The  asymptotic results for $\RR'(n)$ in Table \ref{tab:differentRn} are correct.
\end{prop}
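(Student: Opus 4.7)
The plan is to rephrase the defining condition of $\RR'(n)$ as a statement about consecutive even return probabilities, and then apply the cited asymptotics of $\mu_n(e)$ to each row of Table~\ref{tab:differentRn}.

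Every group in the table is amenable, so $D = |S|$, and substituting $d_{2k} = |S|^{2k}\mu_{2k}(e)$ from \eqref{eqn:mu-d} into Definition~\ref{defn:Rprime} yields
$$\RR'(n) = \min\left\{k\;:\; 1 - \frac{\mu_{2k+2}(e)}{\mu_{2k}(e)} < \frac{1}{n|S|^2}\right\}.$$
Set $\psi(k) = -\log \mu_{2k}(e)$. Since the ratio tends to $1$ in every infinite example, $1 - e^{-x} \approx x$ lets us rewrite the bracketed condition as $\psi(k+1) - \psi(k) \precsim 1/n$. Hence $\RR'(n)$ is asymptotically the inverse, evaluated at $1/n$, of the first difference of $-\log\mu_{2k}(e)$; for the regular $\psi$ appearing in the table this coincides with the inverse of the logarithmic derivative of $\mu_{2k}(e)$.

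The main obstacle is that the asymptotic $\mu_n(e) \sim h(n)$, in the weak $\sim$-sense of the paper, does not by itself pin down the ratio $\mu_{2k+2}(e)/\mu_{2k}(e)$, so the passage from a first difference of $\log \mu_{2k}(e)$ to the derivative of $\log h$ needs justification. I would handle this by combining two standard facts: (i) for a symmetric random walk on a finitely generated group, $\mu_{2k}(e)$ is log-convex in $k$ (an immediate Cauchy--Schwarz consequence of the spectral theorem for the transition operator), so the forward differences of $\psi$ are monotone; and (ii) in each row of the table the function $h$ is smooth and regularly varying. Together these permit replacing $\psi(k+1) - \psi(k)$ by $\psi'(k)$ up to a bounded multiplicative factor, which is all that $\sim$-equivalence requires.

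Given this reduction, each row becomes a short calculus computation. For $\Z^d$, $\psi(k) \sim (d/2)\log k$ gives $\psi'(k) \sim 1/k$, so $\RR'(n) \sim n$. For $BS(1,N)$, $\psi(k) \sim k^{1/3}$ gives $\psi'(k) \sim k^{-2/3}$ and $\RR'(n) \sim n^{3/2}$. For $\Z \wr \Z$, $\psi(k) \sim k^{1/3}(\log k)^{2/3}$ yields $\RR'(n) \sim n^{3/2}\log n$. Finally, for the $(d-1)$-fold iterated wreath product, writing $\alpha = d/(d+2)$ and $\beta = 2/(d+2)$, the identity $1 - \alpha = \beta$ makes the inversion of $\psi'(k) \sim k^{-\beta}(\log k)^{\beta}$ clean and yields $\RR'(n) \sim n^{(d+2)/2}\log n$, matching the table.
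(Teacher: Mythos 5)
Your proposal is correct, and it is in fact more complete than the argument given in the paper. The paper's proof is a guess-and-verify scheme: postulate a candidate $f(n)$, check that $L=\lim_n n\bigl(|S|^2-d_{2f(n)+2}/d_{2f(n)}\bigr)$ exists and is nonzero, and conclude $\RR'(n)\sim f(Ln)$ --- with the actual verification for each row of the table explicitly left as an exercise (deferred to the second author's thesis). You instead reformulate the defining inequality directly in terms of $\psi(k)=-\log\mu_{2k}(e)$, identify $\RR'$ as the inverse of the first difference of $\psi$, and then carry out the inversion row by row; your four computations all reproduce the table correctly. More importantly, you isolate and address a point the paper glosses over entirely: the return-probability asymptotics are only known up to the weak $\precsim$-equivalence, which by itself says nothing about the consecutive ratio $\mu_{2k+2}(e)/\mu_{2k}(e)$. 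Your appeal to log-convexity of $k\mapsto\mu_{2k}(e)$ (spectral theorem/Cauchy--Schwarz) is exactly the right tool: concavity of $\psi$ sandwiches $\psi(k+1)-\psi(k)$ between difference quotients over dyadic ranges, and for the rows where $\psi$ grows like a regularly varying function of positive index this pins the difference down to $\asymp\Psi(k)/k$, which is all that $\precsim$-equivalence of $\RR'$ requires. The one thin spot is the slowly varying case $\Z^k$, where $\psi(k)\asymp\log k$ alone does not force $\psi(k+1)-\psi(k)\asymp 1/k$ (the two-sided constants in $\asymp$ leave too much slack in the dyadic difference quotients); there you should invoke the genuine local limit theorem asymptotic $\mu_{2k}(e)\sim Ck^{-d/2}$ rather than the $\precsim$-class. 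With that caveat noted, your route is sound and supplies a justification the paper's own proof lacks.
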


\begin{proof} 
For a given group suppose
$\mu_n(e)\sim g(n)$ where $g$ is a continuous real valued function, as in Table \ref{tab:differentRn}.  
Then $d_n\sim |S|^n g(n)$.

Finding $\RR'(n)$ requires solving the equation 
\begin{equation}\frac{d_{2k+2}}{d_{2k}}=|S|^2-\frac{1}{n}
\label{eqn:Rnkn}
\end{equation} for $k=k(n)$. 
This is equivalent to solving
$$1=n\left(
|S|^2-\frac{d_{2k+2}}{d_{2k}}
\right)$$
for $k$.

Suppose $f(n)$ is a function where 
\begin{equation}\label{eqn:methodRn}
L  =\lim_{n\ra\infty}
n
\left(
|S|^2-\frac{d_{2f(n)+2}}{d_{2f(n)} }
\right) 
\end{equation}
exists and is non-zero. 

If $L=1$ then 
$$\left(
|S|^2-\frac{d_{2f(n)+2}}{d_{2f(n)} }
\right) \sim \frac{1}{n}$$ 
and so 
$$\frac{d_{2f(n)+2}}{d_{2f(n)}} \sim |S|^2-\frac{1}{n}.$$
Then
$k(n)\sim f(n)$ satisfies Equation \ref{eqn:Rnkn}.  Therefore $\RR'(n)$ is asymptotic to $f(n).$

If $L$ exists and is non-zero then 
$$\left(
|S|^2-\frac{d_{2f(n)+2}}{d_{2f(n)} }
\right) \sim \frac{L}{n}.$$
Then $$\left(
|S|^2-\frac{d_{2f(Ln)+2}}{d_{2f(Ln)} }
\right) \sim \frac{L}{Ln}=\frac{1}{n}$$
and so
$\RR'(n)\sim f(L n)$.

The derivations of candidates for $f(n)$ in each case in Table \ref{tab:differentRn} is performed in \cite{CamPhD}.  The results in the table do not include the constant $L$ since the probabilities of return used as input are only correct up to scaling. 
We leave the calculation of Equation \ref{eqn:methodRn} for the results from
Table \ref{tab:differentRn} as an exercise.
\end{proof}

\subsection{Converting from cogrowth to reduced-cogrowth}
We now prove an equivalence between the sub-dominant behaviour of 
the cogrowth and reduced-cogrowth functions. This allows us to borrow the previously listed results for $\RR'$ when discussing $\RR$ and the ERR method. 
The dominant and sub-dominant cogrowth behaviour can be 
analysed from the generating functions for these sequences.

\begin{defn}
Let $d_n$ denote the number of trivial words of length $n$ in 
a finitely generated group.  The \emph{cogrowth series} is 
defined to be $$D(z)=\sum_{n=0}^\infty d_n z^n.$$

Let $c_n$ denote the number of reduced trivial words.  Then 
$$C(z)=\sum_{n=0}^\infty c_n z^n$$ is said to be the 
\emph{reduced-cogrowth series}.
\end{defn}

$D$ and $C$ are the generating functions for $d_n$ and $c_n$ respectively, and are 
related in the following way.
 Let $|S|=2p$ be the size of a symmetric generating set.
Then from \cite{KouksovRationalCogrowth,cogrowthConvertWoess}
\begin{equation}
C(z)=\frac{1-z^2}{1+(2p-1)z^2}D\left(
\frac{z}{1+(2p-1)z^2}
\right)
\label{eqn:cFromD}
\end{equation}
and
\begin{equation}
D(z)=\frac{1-p+p\sqrt{1-4(2p-1)z^2}}{1-4p^2z^2}
C\left(
\frac{1-\sqrt{1-4(2p-1)z^2}}{2(2p-1)z}
\right).
\label{eqn:dFromC}
\end{equation}

The dominant and sub-dominant growth properties of the cogrowth functions may be analysed by considering the 
singularities of these generating functions.
For a detailed study of the relationship between singularities 
of generating functions 
and sub-dominant behaviours of coefficients see \cite{flajolet2009analytic}.

We now outline an example of how the composition of functions (as in Equations~\ref{eqn:cFromD} and \ref{eqn:dFromC}) effects 
the growth properties of the series coefficients.

\begin{example}\label{ex:CvsD}
Consider $$f(z)=\left(
1-\frac{z}{r}
\right)^{-p}.$$

Then (for positive $p$) $f(z)$ has a singularity at $z=r$, and this defines the 
radius of convergence of $f(z)$ and the asymptotic
growth rate of the series coefficients of the 
expansion of $f(z)$.  It 
also determines the principle sub-dominant term contributing 
to the growth of the coefficients.
In this example, the coefficients will grow like $ n^{p-1}r^{-n}.$

We wish to investigate what happens to this growth behaviour 
when we compose the function $f$ with a function $g$.
Consider $f(g(z))$ for some function $g$ for which $g(0)=0$.
The singularities of $g$ are inherited by $f(g(z))$; if $g$ is 
analytic everywhere then the only singularities of $f(g(z))$ 
will occur when $g(z)=r$. In this case, the new radius of convergence
will be the minimum $|z|$ such that $g(z)=r$. Importantly, however, the principle sub-dominant growth term of the
series coefficients will remain polynomial of degree $p-1$.

A variation on this behaviour will occur if there is an 
$r_0$ for which $g(z)$ is 
analytic on the ball of radius $r_0$, and $g(z)=r$ for some 
$z$ in this region. Again, when this occurs, the new radius of 
convergence is obtained by solving $g(z)=r$ and the type 
of the principle sub-dominant term in the growth of the
coefficients remains unchanged.

If there does not exist such an $r_0$, the principle singularity
of $g(z)$ will dominate the growth properties of the
coefficients.
\end{example}

\begin{prop}\label{prop:nonFreeCvsD}
Let $G$ be an {infinite} amenable group generated by $p$ elements and their inverses. 
Then the principle sub-dominant terms contributing to the growth of $d_n$ and $c_n$ are asymptotically equivalent, except when the group is infinite cyclic.
\end{prop}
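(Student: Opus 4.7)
The plan is to exploit the fact that the Grigorchuk--Cohen criterion (Definition~\ref{defn:amenabilityCogrowth}) pins down the radii of convergence of $D(z)$ and $C(z)$ as $1/(2p)$ and $1/(2p-1)$ respectively, and then to apply the singularity-analysis reasoning of Example~\ref{ex:CvsD} to the composition identities (\ref{eqn:cFromD}) and (\ref{eqn:dFromC}). In each direction I would argue that the inner substitution is analytic on a neighbourhood of the relevant dominant singularity, so that the local singular type of the outer series is inherited rather than modified; a standard transfer theorem \cite{flajolet2009analytic} then converts this into asymptotic equivalence of the principal sub-dominant coefficients. Throughout I may assume $p\geq 2$, since the excluded infinite cyclic case is exactly $p=1$ for an amenable group generated by a single pair $\{a,a^{-1}\}$.

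For the direction $D\to C$, set $g_1(z) = z/(1+(2p-1)z^2)$. Its only singularities are simple poles at $z=\pm i/\sqrt{2p-1}$, which for $p\geq 2$ lie strictly outside $|z|\leq 1/(2p-1)$, and a direct calculation gives $g_1(1/(2p-1))=1/(2p)$. Thus $g_1$ pulls the dominant singularity of $D$ back to $z=1/(2p-1)$ while remaining analytic on a neighbourhood of the closed disc of radius $1/(2p-1)$. The rational prefactor $(1-z^2)/(1+(2p-1)z^2)$ is analytic and nonzero at $z=1/(2p-1)$, so the local singular form of $C$ at $1/(2p-1)$ is that of $D$ at $1/(2p)$ up to an analytic, nonvanishing factor.

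For the direction $C\to D$, set $g_2(z) = (1-\sqrt{1-4(2p-1)z^2})/(2(2p-1)z)$, taking the principal branch. It has algebraic branch-point singularities at $z=\pm 1/(2\sqrt{2p-1})$, and the equation $g_2(z)=1/(2p-1)$ has $z=1/(2p)$ as its unique preimage in the disc of analyticity (the other branch solution being extraneous). The decisive inequality $1/(2p)<1/(2\sqrt{2p-1})$ reduces to $(p-1)^2>0$, i.e.\ $p\geq 2$, so under our hypothesis $g_2$ is analytic in a neighbourhood of $z=1/(2p)$. The algebraic prefactor $(1-p+p\sqrt{1-4(2p-1)z^2})/(1-4p^2z^2)$ is an indeterminate $0/0$ form at $z=1/(2p)$; a short Taylor expansion in $w=z-1/(2p)$ shows both numerator and denominator vanish simply with ratio tending to the nonzero constant $(2p-1)/(2(p-1))$. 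The main technical obstacle I foresee is verifying that this ratio extends to an honest analytic (not merely bounded) factor in a neighbourhood of $1/(2p)$, so that the transfer theorem applies verbatim; the explicit expansion above supplies this cleanly for $p\geq 2$. When $p=1$ the branch point of $g_2$ coincides with the pull-back point and the prefactor's $(2(p-1))$ denominator has no cancellation available, producing additional branch-type singular behaviour in $D$ that has no counterpart in $C$ (indeed $C(z)\equiv 1$ for $\Z$); this accounts for the exception in the proposition.
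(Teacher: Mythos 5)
Your argument is correct and follows essentially the same route as the paper: both hinge on the observation that the substitution in Equation~\ref{eqn:cFromD} is analytic on a disc strictly containing the pulled-back singularity $1/(2p-1)$ precisely when $p\geq 2$, so the singularity's location moves but its type (and hence the principal sub-dominant term) is preserved, with $p=1$ failing because the substitution's own singularity coincides with the pull-back point. The only difference is that you additionally verify the reverse direction via Equation~\ref{eqn:dFromC}, including the $0/0$ analysis of the algebraic prefactor at $z=1/(2p)$; the paper treats only the $D\to C$ direction, which suffices since the substitution is locally biholomorphic there, but your extra check is sound and makes the $p=1$ exception more transparent.
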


\begin{proof}
For an amenable group generated by $p$ elements and their inverses the radius of convergence for $D(z)$ is exactly
$1/2p$.  This follows immediately from Definition \ref{defn:amenabilityCogrowth}.

Now from Equation~\ref{eqn:cFromD}, the reduced-cogrowth 
series is obtained by composing the cogrowth series with 
$$p(z)=\frac{z}{1+(2p-1)z^2}$$ and then multiplying by 
$$q(z)=\frac{1-z^2}{1+(2p-1)z^2}.$$
Both of these functions are analytic inside the ball of radius 
$1/\sqrt{2p-1}$.

Now 
\begin{equation}\label{eq:alternateCogrowthProof}
p\left(\frac{1}{2p-1}\right)=\frac{1}{2p},
\end{equation}
the singularity of $D(z)$.  Hence, $1/(2p-1)$ is a singularity of $D(p(z))$, and hence of $C(z)$.  Note that if the group is infinite cyclic, then $p=1$ and $1/(2p-1)$ and $1/\sqrt{2p-1}$ are equal.  In this scenario the radius of convergence of $p(z)$ is reached at the same moment that 
$p(z)$ reaches the radius of convergence of $D(z)$. This means that both $p$ and $q$ contribute to the principle singularity, and this explains why the reduced and non-reduced cogrowth functions
for the infinite cyclic group exhibit such different behaviour.

If $p>1$ then  $1/(2p-1)$
is inside the ball of radius $1/\sqrt{2p-1}$ (ie, inside the region of convergence for $p$ and $q$).  Thus, the singularity of $D$ is reached before $z$ approaches the singularity of $p$ and $q.$ 

In this case the substitutions in Equation~\ref{eqn:cFromD} change the location of the principle singularity, but do not change the type of the singularity, or the form of the principle  sub-dominant 
term contributing to the growth of the series coefficients.
\end{proof}

\begin{cor}\label{cor:RvsRr}
Suppose $G$ is a finitely generated, infinite amenable group that is not the infinite cyclic group. Then $\RR$ is 
asymptotically 
equivalent to $\RR'$. 
\end{cor}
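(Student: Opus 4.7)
The plan is to bootstrap directly from Proposition \ref{prop:nonFreeCvsD} by showing that the defining equations for $\RR$ and $\RR'$ differ only in an exponential base and a bounded multiplicative constant, and then to feed both into the framework of Proposition \ref{prop:ProvingRn}.

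First, I would use Proposition \ref{prop:nonFreeCvsD} to write, for $G$ infinite, amenable, and not infinite cyclic,
\[
c_n \,\sim\, A\,h(n)\,C^n,\qquad d_n \,\sim\, B\,h(n)\,D^n,
\]
where $C=|S|-1$, $D=|S|$, and $h(n)$ is the common sub-dominant factor coming from the shared singularity type of $C(z)$ and $D(z)$ established in the proof of Proposition \ref{prop:nonFreeCvsD}. This is the step where I expect to do real work: one must translate ``same principle sub-dominant term at the singularity'' (a statement about generating functions) into the pointwise asymptotic equivalence of the $h$-factors in $c_n$ and $d_n$. The standard transfer theorems of singularity analysis (as in \cite{flajolet2009analytic}) apply, but one has to check that no other singularity of $C(z)$ of the same modulus can contribute, which follows because the auxiliary maps $p(z)$ and $q(z)$ in Equation~\ref{eqn:cFromD} are analytic on a strictly larger disk under the hypothesis $p>1$.

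With this in hand, the ratios are
\[
\frac{c_{2k+2}}{c_{2k}} = C^2\,\frac{h(2k+2)}{h(2k)}\bigl(1+o(1)\bigr),\qquad \frac{d_{2k+2}}{d_{2k}} = D^2\,\frac{h(2k+2)}{h(2k)}\bigl(1+o(1)\bigr),
\]
so that
\[
C^2 - \frac{c_{2k+2}}{c_{2k}} \,\sim\, C^2\!\left(1-\frac{h(2k+2)}{h(2k)}\right),\qquad D^2 - \frac{d_{2k+2}}{d_{2k}} \,\sim\, D^2\!\left(1-\frac{h(2k+2)}{h(2k)}\right).
\]
The two right-hand sides differ only by the bounded nonzero factor $C^2/D^2$.

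Finally, I apply the technique of Proposition \ref{prop:ProvingRn} (specifically Equation~\ref{eqn:methodRn}) to each sequence separately. If $f$ is a function inverting $k \mapsto 1-h(2k+2)/h(2k)$ up to scaling, then the method gives $\RR'(n)\sim f(L_D n)$ for some nonzero constant $L_D$, and applied to $c_n$ it gives $\RR(n)\sim f(L_C n)$ with $L_C = (C^2/D^2)L_D$, likewise nonzero. Because $\sim$ is invariant under rescaling the argument by a positive constant, we obtain $\RR(n)\sim f(L_C n)\sim f(L_D n)\sim \RR'(n)$, completing the proof. The main obstacle, as noted, is the clean passage from singularity-level equivalence to termwise equivalence of the sub-dominant factors; everything after that is a direct application of the Proposition \ref{prop:ProvingRn} machinery.
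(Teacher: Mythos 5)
Your proposal is correct, and it reconstructs what the paper evidently intends but never writes down: the corollary is stated with no proof, immediately after Proposition~\ref{prop:nonFreeCvsD}, as though it were automatic. Your argument supplies the missing step in precisely the way the surrounding machinery suggests. The key observation --- that after factoring out the shared sub-dominant $h(n)$, the quantities $C^{2}-c_{2k+2}/c_{2k}$ and $D^{2}-d_{2k+2}/d_{2k}$ differ only by the constant $C^{2}/D^{2}$, and that the paper's definition of $\sim$ permits rescaling the argument by any positive constant (so $f(L_C n)\sim f(L_D n)$ holds trivially) --- is exactly the point that makes the corollary immediate. Your flagged obstacle (passing from ``same singularity type'' to termwise asymptotics of $c_n$ and $d_n$ via transfer theorems) is a fair caveat, but it is inherent in Proposition~\ref{prop:nonFreeCvsD} itself and in the informal ``$\mu_n(e)\sim g(n)$'' hypothesis of Proposition~\ref{prop:ProvingRn}; your proof operates at the same level of rigor as the paper does. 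One small remark: the unique-dominant-singularity concern you raise is real (for even presentations $D(z)$ is even and $z=-1/2p$ is also singular, so $C(z)$ acquires a conjugate singularity at $-1/(2p-1)$), but this is already neutralised by the fact that $\RR$ and $\RR'$ are defined using only even indices $2k$, so only the even subsequence matters.
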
 

\begin{rmk}
An alternate proof of the Grigorchuk/Cohen characterisation of amenability is easily constructed from an analysis of the singularities of $C(z)$ and $D(z)$.  For example, Equation \ref{eq:alternateCogrowthProof} proves the first result from Definition \ref{defn:amenabilityCogrowth}.
This argument also picks up that the infinite cyclic group presents a special case.  Though amenable, $\limsup_{n\ra\infty}c_n\neq |S|-1$.
For this group we have $\RR(n)\sim 0$ while $\RR'(n)\sim n$.
\end{rmk}

\subsection{Sub-dominant behaviour in the cogrowth of $F$
\label{sec:subDomInF}
}
The groups $BS(1,N)$ limit to $\Z\wr\Z$ in the space of marked groups.  This implies that the growth of the function $\mathcal{R}'$ and hence $\RR$ for $BS(1,N)$ increases with $N$.  This is consistent with Table \ref{tab:differentRn}, since these results do not include scaling constants.  This leads to the following result.

\begin{prop}\label{prop:connectBStoThompsons}
If Thompson's group $F$ is amenable, its $\mathcal{R}$ function grows faster than the $\mathcal{R}$  function for any $BS(1,N)$.  In particular, it is asymptotically super-polynomial.
\end{prop}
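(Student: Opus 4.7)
The plan is to chain together Moore's tower-of-twos lower bound on the F\o lner function of $F$, a standard F\o lner-versus-return-probability inequality (Coulhon-Saloff-Coste, compare \cite{PittetSCsolvable2003}), and the technique of Proposition~\ref{prop:ProvingRn} for passing from asymptotics of $\mu_n(e)$ to asymptotics of $\RR'$. Corollary~\ref{cor:RvsRr} will then transfer the conclusion from $\RR'$ back to $\RR$, and the result for $BS(1,N)$ will follow because $\RR_{BS(1,N)}\sim n^{3/2}$ is a fixed polynomial.

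Assume throughout that $F$ is amenable. Since $F$ is infinite, finitely generated, and not infinite cyclic, Corollary~\ref{cor:RvsRr} gives $\RR_F\sim\RR'_F$, so it suffices to show $\RR'_F$ is super-polynomial. Using Equation~\ref{eqn:mu-d}, the function $\RR'_F$ is determined by the decay of the simple random walk return probability $\mu_n^{(F)}(e)=d_n/|S|^n$.

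First I would invoke Moore's theorem to conclude that, for every $d\geq 1$, the F\o lner function of $F$ eventually dominates a tower of $d-1$ twos, hence eventually dominates the F\o lner function of the $(d-1)$-fold iterated wreath product $\Z\wr\Z\wr\cdots\wr\Z$ appearing in Table~\ref{tab:differentRn}. Next, I would apply the Coulhon-Saloff-Coste heat-kernel bound (an upper bound on $\mu_{2n}(e)$ in terms of an integral of the inverse isoperimetric profile) to translate this F\o lner inequality into a decay estimate on $\mu_n^{(F)}(e)$: since larger F\o lner function forces faster return-probability decay, $\mu_n^{(F)}(e)$ decays at least as fast as $\exp\bigl(-c\,n^{d/(d+2)}(\ln n)^{2/(d+2)}\bigr)$ for every fixed $d$. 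Plugging this bound into Equation~\ref{eqn:methodRn} and running the argument in the proof of Proposition~\ref{prop:ProvingRn} yields $\RR'_F(n)\succsim \ln(n)\,n^{(d+2)/2}$ for every $d$, so $\RR'_F$ grows faster than any polynomial.

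Finally, Corollary~\ref{cor:RvsRr} gives $\RR_F\sim\RR'_F$ super-polynomial, and since Table~\ref{tab:differentRn} combined with Corollary~\ref{cor:RvsRr} gives $\RR_{BS(1,N)}\sim n^{3/2}$ for every $N$, $\RR_F$ eventually dominates $\RR_{BS(1,N)}$ for each $N$, proving both statements. The main obstacle is the F\o lner-to-return-probability step: one must carefully extract a quantitative monotonicity (bigger F\o lner function $\Rightarrow$ faster decay of $\mu_n(e)$) at the extreme scale where F\o lner$^{-1}$ is iterated logarithm, and then confirm that the informal calculation of Proposition~\ref{prop:ProvingRn} still delivers polynomial lower bounds of arbitrarily high degree from such a decay bound on $\mu_n^{(F)}(e)$.
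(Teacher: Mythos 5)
Your argument is sound at the level of rigour the paper itself operates at, but it reaches the return-probability estimate for $F$ by a genuinely different route. The paper never invokes Moore's theorem or the Coulhon--Saloff-Coste inequality: instead it uses the algebraic fact that $F$ contains every finite-depth iterated wreath product $\Z\wr\Z\wr\cdots\wr\Z$ (Guba--Sapir), together with the Pittet--Saloff-Coste stability result that a group's return probability cannot asymptotically exceed that of any finitely generated subgroup; this directly forces $\mu_n^{(F)}(e)$ to decay faster than $e^{-n^{d/(d+2)}(\ln n)^{2/(d+2)}}$ for every $d$, after which the conversion to $\RR'$ and then to $\RR$ proceeds exactly as in your final step. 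Your route replaces the subgroup input with the analytic chain ``Moore's conditional F\o lner lower bound $\Rightarrow$ heat-kernel upper bound $\Rightarrow$ $\RR'$ lower bound.'' Each approach has its own cost and payoff. The paper's argument needs only the subgroup structure of $F$ and a soft monotonicity statement, and uses the amenability hypothesis solely where it is genuinely indispensable: to make $\mu_n(e)$ detect the principal sub-dominant term of $d_n$ (the paper remarks after the proof that the same decay holds for non-amenable $F$ but then says nothing about $\RR'$). Your argument imports Moore's much stronger, but conditional, bound and the quantitative CSC machinery; in principle it could yield a far stronger conclusion than super-polynomial growth of $\RR$ (the tower-of-twos profile forces nearly exponential decay of $\mu_n(e)$), though you only extract the polynomial-of-every-degree bound needed here. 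The one step you should make explicit is the comparison ``F\o lner function of $F$ dominates that of the wreath product, hence $\mu_n^{(F)}(e)$ decays faster than the wreath product's actual return probability'': CSC gives an upper bound on $\mu_n(e)$ that is monotone in the isoperimetric profile, so you additionally need that this bound is sharp for the iterated wreath products (which it is, by Erschler and Pittet--Saloff-Coste, as recorded in Table~\ref{tab:differentRn}); with that observation, and granting the same regularity assumptions on $\mu_n(e)$ that Proposition~\ref{prop:ProvingRn} already makes, your proof goes through.
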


\begin{proof}
By the convergence of $BS(1,N)$ to $\Z\wr\Z$ in the space of marked groups we have that, for any $N$, the function $\RR'$ for $BS(1,N)$ grows slower than the corresponding function for $\Z\wr\Z$.  In \cite{stabilityRandomWalk} it is proved that, for finitely generated groups, the probability of return cannot asymptotically exceed the probability of return of any finitely generated subgroup. This implies that, for finitely generated amenable groups, the $\RR'$ function of the group must grow faster than the $\RR'$ function of any finitely generated subgroup.  Since there is a subgroup of $F$ isomorphic to
$\Z\wr\Z$, $\RR'(n)$ for $F$ must grow faster than $\RR'(n)$ for $\Z\wr\Z$ and hence $BS(1,N)$.

Since $F$ contains every finite depth iterated wreath products of $Z$ (\cite{GubaSapir} Corollary 20), 
the probability of return for $F$ decays faster than
$$e^{-n^{\frac{d}{d+2}}(\ln n)^{\frac{2}{d+2}}}$$ for any $d$. 
Taking the limit as $d$ approaches infinity of the corresponding values for $\RR'$ and then doing the conversion from $\RR'$ to $\RR$ gives the final result.
\end{proof}

Note that if $F$ is non-amenable, then even though it still contains these subgroups, they do not affect the $\RR'$ function.  In this scenario it is still true that the return probability for $F$ decays faster than the interated wreath product, because $F$ would have  exponentially decaying return probability.  For non-amenable groups the return probability does not identify the principle sub-dominant term in $d_n$, and hence does not correlate directly with $\RR'$.
\section{The ERR algorithm}
\label{sec:ERRsection}

We start  by summarising the original work
by the first author, Rechnitzer and van Rensburg. Only the details 
directly pertinent to the present paper are discussed here, for a more
detailed analysis of the random walk 
algorithm and a derivation 
of the stationary distribution we refer the 
reader to \cite{ERR}.  For the sake of brevity the 
random walk performed by the algorithm will be referred to as the ERR random walk.

Recall that a group presentation, denoted $\langle S \mid R \rangle$, consists
of a set $S$ of formal symbols (the generators) and a set $R$ of words written 
in $S^{\pm 1}$ (the relators) and corresponds to the quotient 
of the free group on $S$ by the normal closure of the 
relators $R$. In our paper, as in \cite{ERR}, all groups
will be finitely presented: both $S$ and $R$ will be finite.
Furthermore, the implementation of the algorithm assumes  both 
$S$ and $R$ to be symmetric, that is, $S=S^{-1}$ and $R=R^{-1}$.
In addition, for convenience $R$ is enlarged to be closed under cyclic permutation.
Recall that 
$c_n$ counts the 
number of reduced words in $S$ of length $n$ which represent
the identity in the group (that is, belong to the normal 
closure of $R$ in the free group).

\subsection{The ERR random walk}

The ERR random walk is not a random walk on the Cayley graph of a group, but instead a random walk on 
the set of trivial words for the group presentation. 
This makes the algorithm extremely easy to implement,  since it does not require an easily computable normal
 form or even a solution to the word problem.
The walk begins at the empty word, and  constructs
new 
trivial words from the current trivial word 
using one of two moves:
\begin{itemize}
	\item (conjugation by $x\in S$).  
	In this move an element is chosen from 
	$S$ according to a predetermined probability distribution.
	The current word is conjugated
	by the chosen generator and then freely reduced to produce 
	the new 	candidate word.
	\item (insertion of a relator).  In this move a relator is 
	chosen from $R$ according to a predetermined 
	distribution and inserted into the current word at a position
	chosen uniformly at random .  In order to maintain the detailed
	balance criteria (from which the stationary distribution 
	is derived) 
	it is necessary to allow only those insertions which 
	can be immediately reversed by inserting the inverse of the 
	relator at the same position. To this end the a notion of \emph{left insertion} is introduced; 
	after relators are inserted free reduction is done on only the left hand side of the relator. 
	If after this the word is not freely reduced the move is rejected.
\end{itemize}

Transition probabilities are defined which determine whether 
or not the trivial word created with these moves is accepted as the new state. These probabilities involve parameters $\alpha\in\mathbb{R}$ and 
$\beta\in (0,1)$ which may be adjusted
to control the distribution of the walk.

Let the current word be $w$ and  the candidate word be $w'$.

\begin{itemize}
\item If $w'$ was obtained from $w$ via a conjugation it is accepted as the new 
current state with probability 
$$\min \left\lbrace 1,
\left(\frac{\left\vert w'\right\vert+1}
{\left\vert w\right\vert+1}\right)^{1+\alpha} 
\beta^{\left\vert w'\right\vert-\left\vert w\right\vert}
\right\rbrace.$$

\item If $w'$ was obtained from $w$ via an insertion it  is accepted as the new 
state with probability 
$$\min \left\lbrace 1,
\left(\frac{\left\vert w'\right\vert+1}
{\left\vert w\right\vert+1}\right)^{\alpha} 
\beta^{\left\vert w'\right\vert-\left\vert w\right\vert}
\right\rbrace.$$
If $w'$ is not accepted the new 
state remains as $w$.
\end{itemize}

These probabilities are chosen so that the distribution 
on the set of all trivial words given by 
$$\pi\left(u\right)=\frac{\left(\left|u\right|+1\right)^{1+\alpha}
\beta^{\left|u\right|}}
{Z},$$
(where $Z$ is a normalizing constant) can be proved to be  the unique stationary 
distribution of the Markov chain, and the limiting 
distribution of the random walk.

The following result is then given.
\begin{prop}[\cite{ERR}]
 As $\beta$ approaches $$\beta_c = \frac1{\limsup_{n\rightarrow\infty} (c_n)^{1/n}}$$ 
 the expected value
of the word lengths visited approaches infinity. 
\end{prop}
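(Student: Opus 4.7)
The plan is to write $E_\pi[|u|]$ explicitly from the stated form of $\pi$ and reduce the proposition to a divergence statement about the normalising constant at $\beta_c$. Grouping trivial words by length and setting $a_n:=c_n(n+1)^{1+\alpha}$, the explicit formula for $\pi$ yields
\begin{equation*}
E_\pi[\,|u|\,]=\frac{\sum_{n\ge 0} n\,a_n\beta^n}{Z(\beta)},\qquad Z(\beta)=\sum_{n\ge 0} a_n\beta^n.
\end{equation*}
Since $(n+1)^{1+\alpha}$ is sub-exponential, $\limsup a_n^{1/n}=\limsup c_n^{1/n}=1/\beta_c$, so $Z$ is a power series with non-negative coefficients whose radius of convergence is exactly $\beta_c$.

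Next I would apply a tail estimate. Fix $N\in\N$ and decompose $Z(\beta)=A_N(\beta)+T_N(\beta)$, where $A_N(\beta)=\sum_{n<N} a_n\beta^n$ is a polynomial in $\beta$ (hence bounded on the compact interval $[0,\beta_c]$) and $T_N$ is the tail. The elementary bound $E_\pi[|u|]\ge N\cdot\pi(|u|\ge N)$ gives
\begin{equation*}
E_\pi[\,|u|\,]\ge N\left(1-\frac{A_N(\beta)}{Z(\beta)}\right).
\end{equation*}
If one can show $Z(\beta)\to\infty$ as $\beta\uparrow\beta_c$, then $A_N(\beta)/Z(\beta)\to 0$ for every fixed $N$, so $\liminf_{\beta\uparrow\beta_c}E_\pi[\,|u|\,]\ge N$ for every $N$, which yields the proposition.

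The main obstacle is therefore verifying that $Z(\beta)\to\infty$ as $\beta\uparrow\beta_c$. By monotone convergence for power series with non-negative coefficients, $\lim_{\beta\uparrow\beta_c}Z(\beta)=Z(\beta_c)\in(0,+\infty]$, so one must rule out the finite case. This is where the polynomial factor $(n+1)^{1+\alpha}$ plays a crucial role: the parameter $\alpha$ is assumed chosen so that the boost is strong enough to force $\sum_n c_n(n+1)^{1+\alpha}\beta_c^n=\infty$. Concretely, if $c_n\beta_c^n\ge c\cdot n^{\sigma}$ along a subsequence of positive density for some $\sigma\in\R$ and $c>0$, any $\alpha\ge -\sigma-2$ suffices; once divergence of $Z$ is in hand, the tail estimate above gives $E_\pi[|u|]\to\infty$.
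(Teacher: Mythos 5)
First, a point of comparison: the paper itself offers no proof of this proposition --- it is imported verbatim from \cite{ERR} --- so there is no in-paper argument to measure yours against, and I can only assess the proposal on its own terms. Your skeleton is sound as far as it goes: writing $E_\pi[|u|]=\sum_n n\,a_n\beta^n/Z(\beta)$ with $a_n=c_n(n+1)^{1+\alpha}$, observing that $Z$ has radius of convergence exactly $\beta_c$, and using the Markov-type bound $E_\pi[|u|]\ge N\bigl(1-A_N(\beta)/Z(\beta)\bigr)$ correctly reduces the claim to the divergence $Z(\beta)\to\infty$ as $\beta\uparrow\beta_c$.

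The genuine gap is exactly the step you flag, and your proposed patch does not close it. You assume $c_n\beta_c^n\ge c\,n^{\sigma}$ along a positive-density subsequence and then choose $\alpha$ accordingly; but $\alpha$ is a free parameter of the algorithm in the proposition, not something to be tuned after inspecting $c_n$, and more importantly no such $\sigma$ exists for the groups this paper is actually concerned with. By Equation~\ref{eqn:mu-d} and Proposition~\ref{prop:nonFreeCvsD}, for the amenable groups in Table~\ref{tab:differentRn} the sub-dominant factor $c_n\beta_c^n$ decays like a stretched exponential (e.g.\ like $e^{-n^{1/3}}$ for $BS(1,N)$), so $\sum_n c_n(n+1)^{1+\alpha}\beta_c^n<\infty$ for \emph{every} $\alpha$, and the first-moment series $\sum_n n\,c_n(n+1)^{1+\alpha}\beta_c^n$ converges as well. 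In that case both numerator and denominator of $E_\pi[|u|]$ converge by monotone convergence as $\beta\uparrow\beta_c$, and the expectation tends to a \emph{finite} limit; your route therefore cannot succeed without hypotheses on the sub-dominant behaviour of $c_n$ beyond those stated. Two further remarks: (i) divergence of $Z(\beta_c)$ is sufficient but not necessary --- divergence of the first-moment series alone would do, for instance when $Z$ has a bounded algebraic singularity at $\beta_c$ --- yet even this weaker condition fails in the stretched-exponential regime; (ii) the obstruction you have run into is not incidental: it is precisely the sub-dominant phenomenon that $\RR$ quantifies and that Section~\ref{subsec:subdom} identifies as the source of the pathological behaviour of the ERR walk, so an honest proof of the proposition must either add a hypothesis ruling this out or argue about the walk itself rather than the stationary mean.
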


This result leads to the following method for estimating the value of $\beta_c$. For each presentation, 
random walks are run with different values of $\beta$.  Average
word length is plotted against $\beta$.  The results obtained for Thompson's group $F$ are reproduced in Figure \ref{fig:ERRpaperThompsons}. The values for $\beta$ at which the data points diverge gives an indication of
$\beta_c$, and hence the amenability or otherwise of the group.

\begin{figure}
\includegraphics[width=110mm]{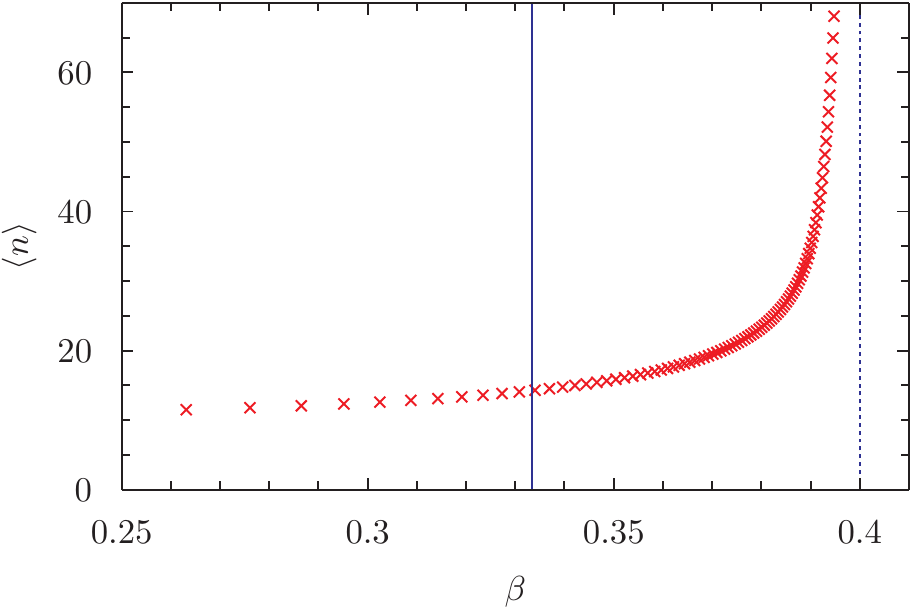} 
\caption{The results from \cite{ERR} of the ERR algorithm applied to the standard presentation of Thompson's group $F$. Each data point plots the average word length of an ERR random walk against the paramater $\beta$ used.
\label{fig:ERRpaperThompsons}
}
\end{figure}

Random walks were run on presentations for a selection of  amenable and non-amenable groups, including Baumslag-Solitar groups, some free product examples whose cogrowth series are known \cite{KouksovFreeProduct}, the genus 2 hyperbolic surface group, a finitely presented group related to the basilica group, and Thompson's group $F$.

The data in Figure \ref{fig:ERRpaperThompsons} appears to show fairly convincingly that the location of $\beta_c$ 
is a long way from the value of $\frac13$ expected were the group  amenable.

It is  noted in \cite{ERR}
that a long 
random walk may be split into shorter segments,
and the variation in average word lengths of the segments gives an 
estimation of the error in the estimated expected word length.

\begin{rmk}\label{rmk:implementation_details}
In the original work reported in \cite{ERR}, the algorithm was coded in {c++}, words were stored as linked lists,
 the GNU Scientific Library was used to generate pseudo-random numbers, and {\em parallel tempering} was
  used to speed up the convergence
of the random walk. For independent verification the second author
 coded the algorithm in python, kept words as strings, used the python package \emph{random}, and no 
 tempering was used. Results obtained were consistent with those in \cite{ERR}. The experimental analysis
  and modifications described in this paper use the python version of the second author.
\end{rmk}

\section{Investigating Pathological Behaviour\label{sec:pathological_behaviour}}

The theory underpinning the ERR random walk is 
complete --- the random walk is certain to converge to the stationary 
distribution.  This does not preclude, however, convergence 
happening at a computationally
indetectible rate.  
Since there are finitely presented groups with unsolvable word problem, there is no chance of deriving  bounds on the 
rates of convergence of the walk  in 
any generality. 
 In the process of independently verifying the results in \cite{ERR}, however, we were able to identify two properties of 
group presentations which appear to slow the rate of convergence.
The first of these is unconnected with the F\o lner function, and
does not pose any problem to the implementation of the ERR algorithm to Thompson's group $F$. 
It does, however, refute the claim in (\cite{ERR} Section 3.7) that the method can be successfully applied to infinite presentations.

\subsection{Walking on the wrong group}\label{subsec:wrong_group}
It is easy to see from the probabilistic selection criteria 
used by the ERR random walk that moves which increase the word 
length by a large amount are rejected with high probability. 
This poses a problem for group presentations containing long relators since insertion moves that attempt to insert a long relator will be 
accepted much less often than moves which attempt to insert a shorter relation.

The following example makes this explicit.

\begin{lem}
All presentations of the form 
$$\left\langle a,b\mid abab^{-1}a^{-1}b^{-1},\;a^n b^{-n-1}\right\rangle$$ 
describe the trivial group.
\end{lem}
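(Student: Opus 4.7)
The plan is to first recognise that the first relator $abab^{-1}a^{-1}b^{-1}$ is just the braid relation $aba=bab$ in disguise, so the presented group $G$ is a quotient of the braid group $B_3=\langle a,b\mid aba=bab\rangle$ by the normal closure of the single element $a^nb^{-n-1}$. Setting $\Delta:=aba=bab$, the key fact I would use is the familiar $B_3$ identity
\[
\Delta\, a\, \Delta^{-1}=b,\qquad \Delta\, b\, \Delta^{-1}=a,
\]
i.e.\ conjugation by $\Delta$ swaps the two generators. Both equalities follow from the braid relation by a short direct calculation (e.g.\ $aba\cdot a\cdot a^{-1}b^{-1}a^{-1}=aba\cdot b^{-1}a^{-1}=bab\cdot b^{-1}a^{-1}=b$).

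Since these identities hold already in $B_3$, they descend to $G$. In particular, conjugation by $\Delta$ is an inner automorphism of $G$ that interchanges $a$ and $b$. Applying it to the given relation $a^n=b^{n+1}$ therefore yields the second relation $b^n=a^{n+1}$ as an identity in $G$, without any assumption on the elements.

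With both $a^n=b^{n+1}$ and $b^n=a^{n+1}$ in hand, I would simply compute
\[
a^n \;=\; b^{n+1} \;=\; b\cdot b^n \;=\; b\cdot a^{n+1},
\]
which immediately gives $b=a^{-1}$ in $G$. Substituting $b=a^{-1}$ into $aba=bab$ collapses it to $a=a^{-1}$, so $a^2=1$; substituting into $a^n=b^{n+1}$ gives $a^n=a^{-(n+1)}$, i.e.\ $a^{2n+1}=1$. Combining $a^2=1$ with $a^{2n+1}=1$ forces $a=1$, and hence $b=a^{-1}=1$, proving $G$ is trivial.

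The only step that needs care is checking the conjugation identities $\Delta a\Delta^{-1}=b$ and $\Delta b\Delta^{-1}=a$; after that, the argument is a short manipulation. If I wanted to avoid appealing to braid group lore, I would simply verify these two identities in-line using the relation $aba=bab$, which adds only a couple of lines to the proof.
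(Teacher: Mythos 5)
Your argument is correct, but it takes a genuinely different route from the paper's. The paper computes the single word $a^nba$ in two ways: using $a^n=b^{n+1}$ (which commutes with $b$) it gets $a^nba=bab^{n+1}$, and by iterating the braid relation $a^iba=a^{i-1}bab$ it gets $a^nba=bab^{n}$; comparing gives $b=1$ directly, and triviality follows. You instead exploit the structural fact that conjugation by $\Delta=aba=bab$ is an inner automorphism of any quotient of $B_3$ swapping $a$ and $b$ (your verification $\Delta a\Delta^{-1}=aba\cdot b^{-1}a^{-1}=bab\cdot b^{-1}a^{-1}=b$ is correct, as is the companion identity), apply it to $a^n=b^{n+1}$ to obtain the mirror relation $b^n=a^{n+1}$, and then deduce $b=a^{-1}$, whence $a^2=1$ and $a^{2n+1}=1$ force $a=b=1$. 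Both proofs are complete and short; the paper's is a purely word-level computation needing no background, while yours is more conceptual in that it isolates the symmetry of the presentation as the engine of the argument, at the cost of one extra in-line verification (the conjugation identities) and the final coprimality step.
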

\begin{proof}
Since $a^n=b^{n+1}$ we have $a^nba=   b^{n+1}ba=bb^{n+1}a=ba^{n+1}=bab^{n+1}.$

Since $aba=bab$ we have 
$a^iba=a^{i-1}bab$ so $a^nba=a^{n-1}bab=a^{n-2}bab^2=\dots =bab^{n}$.

Putting these results together gives $bab^n=bab^{n+1}$ and hence $b$ is trivial.   The result follows.
\end{proof}

By increasing $n$ we can make the second relator arbitrarily large 
without affecting the group represented by the presentation, or the 
group elements represented by the generators.  This implies that 
ERR random walks for each of these presentations should converge
to the same stationary distribution.

Changing the presentation, however, does change the number of 
steps in the ERR random walk needed to reach certain trivial words (such as the word `$a$').  

ERR random walks were performed on these presentations for 
$n= 1, 2, \dots,19$.  As well as recording the average 
word length of words visited, the number of \emph{accepted}
insertions of each relator was  recorded.

\begin{table}
\begin{center}
\begin{tabular}
{|>{\centering\arraybackslash}p{1cm}|>{\centering\arraybackslash}p{2cm}|>
{\centering\arraybackslash}p{3cm}|>{\centering\arraybackslash}p{3cm}|}
\hline 
$n$ & number of steps & number of accepted insertions of small
relator & number of accepted insertions for big relator \\ 
\hline 
1 & $2.0\times 10^8$ & $2977228$ & $7022772$ \\ 
\hline 
2 & $3.6\times 10^8$ & $4420185$ & $5579815$ \\ 
\hline 
3 & $6.1\times 10^8$ & $6323376$ & $3676624$ \\ 
\hline 
4 & $9.0\times 10^8$ & $8016495$ & $1983505$ \\ 
\hline 
5 & $1.2\times 10^9$ & $9088706$ & $911294$ \\ 
\hline 
6 & $1.4\times 10^9$ & $9621402$ & $378598$ \\ 
\hline 
7 & $1.5\times 10^9$ & $9850251$ & $149749$ \\ 
\hline 
8 & $1.7\times 10^9$ & $9943619$ & $56381$ \\ 
\hline 
9 & $1.8\times 10^9$ & $9977803$ & $22197$ \\ 
\hline 
10 & $1.9\times 10^9$ & $9991680$ & $8320$ \\ 
\hline 
11 & $2.1\times 10^9$ & $9997122$ & $2878$ \\ 
\hline 
12 & $2.2\times 10^9$ & $9998720$ & $1280$ \\ 
\hline 
13 & $2.2\times 10^9$ & $9999585$ & $415$ \\ 
\hline 
14 & $2.3\times 10^9$ & $9999938$ & $62$ \\ 
\hline 
15 & $2.4\times 10^9$ & $10000000$ & $0$ \\ 
\hline 
16 & $2.6\times 10^9$ & $10000000$ & $0$ \\ 
\hline 
17 & $2.7\times 10^9$ & $10000000$ & $0$ \\ 
\hline 
18 & $2.8\times 10^9$ & $10000000$ & $0$ \\ 
\hline 
19 & $2.9\times 10^9$ & $10000000$ & $0$ \\ 
\hline 
\end{tabular} 

\end{center}

\caption{\label{tab:trivial_relator_acceptences}  The ERR
algorithm applied to the trivial group with presentation 
$\left\langle a,b \mid aba=bab,\;a^n=b^{n+1} \right\rangle$ for various $n$.  As $n$ increases, the longer relator is successfully 
inserted less frequently.}
\end{table}

Table \ref{tab:trivial_relator_acceptences} shows the sharp decline in the number
of accepted insertions of the second relator as $n$ increases.
Indeed, for $n>14$ there were no instances in which the longer relator
was successfully inserted.  Unsurprisingly, walks for large $n$ did not converge to the same distribution as those where $n$ was small, and for large $n$ the data did not accurately predict the asymptotic growth rate of the cogrowth function. For these $n$ the ERR random walk was actually
taking place on $\langle a,b\mid abab^{-1}a^{-1}b^{-1} \rangle$, which is a presentation for the 3-stand braid group, which is non-amenable.

Note that, given enough time, the longer relator would be 
successfully sampled, and that an infinite random walk is still
guaranteed to converge to the theoretical distribution for the trivial group.  
Such convergence,
however, may take a computationally infeasible amount of time.

\begin{claim}
The presence of long relators in the input presentation slows
the rate at which an ERR random walk converges to the stationary distribution. Therefore, the ERR method cannot be reliably extended to accept infinite
presentations.
\end{claim}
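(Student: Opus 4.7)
The plan is to pin down the claim by a direct quantitative analysis of the insertion acceptance probability, and then argue that for an infinite presentation any finite run of the walk must behave as if it were on a finite sub-presentation. The starting observation is that when a relator $r\in R$ of length $\ell=|r|$ is inserted into the current word $w$, the candidate word $w'$ satisfies $|w'|-|w|\le \ell$ (with equality whenever no immediate left-reduction occurs, which is the generic case), so the insertion acceptance probability is at most
\[
\min\!\left\{1,\left(\frac{|w'|+1}{|w|+1}\right)^{\alpha}\beta^{|w'|-|w|}\right\}\;\le\;K_{\alpha}(|w|)\,\beta^{\ell-O(1)},
\]
where $K_{\alpha}(|w|)$ is at most a polynomial factor in $|w|$. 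Since $\beta\in(0,1)$, this shows that the per-step probability of successfully inserting $r$ decays exponentially in $\ell$. The first step of the proof would therefore be to make this bound precise, including the contribution of the free-reduction step and the uniform choice of insertion position (the latter contributes only a factor $1/(|w|+1)$).

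The second step is to turn the per-step bound into a waiting-time estimate. If $r$ is chosen from $R$ according to some fixed distribution $\nu$, then the probability that a single step of the ERR chain results in an accepted insertion of $r$ is bounded above by $\nu(r)\cdot P(|w|)\beta^{\ell}$ for some polynomial $P$. Hence the expected number of steps between successful insertions of $r$ is at least of order $\beta^{-\ell}/(\nu(r)\,P(|w|))$. Until such an insertion occurs, the trajectory of the chain is indistinguishable from that of the ERR chain run on the reduced presentation $\langle S\mid R\setminus\{r,r^{-1}\}\rangle$. The key step is to observe that for any finite simulation budget $T$, there is a threshold $L=L(T,\beta,\nu)$ such that every relator of length exceeding $L$ has, with probability $1-o(1)$, never been successfully inserted during the first $T$ steps.

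For the second half of the claim about infinite presentations, I would argue as follows. Suppose $R$ is infinite and contains relators of unbounded length. For any computational budget $T$, the walk effectively runs on the finite sub-presentation $\langle S\mid R_L\rangle$ where $R_L=\{r\in R:|r|\le L\}$. In general, as witnessed by the family of presentations in the preceding lemma, the group presented by $\langle S\mid R_L\rangle$ may differ wildly from the group presented by $\langle S\mid R\rangle$: it may be infinite while $\langle S\mid R\rangle$ is trivial, non-amenable while $\langle S\mid R\rangle$ is amenable, and so on. The empirical data in Table~\ref{tab:trivial_relator_acceptences} provides the explicit witness making this heuristic rigorous at the level of a falsifiable claim: taking $r_n=a^n b^{-n-1}$, we see the number of successful insertions decay geometrically in $n$ and vanish altogether for $n\ge 15$ within $T\approx 2.4\times 10^9$ steps, so the chain is provably running on the non-amenable braid-group sub-presentation rather than on the trivial group.

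The main obstacle, and the reason I would present the conclusion as a claim rather than as a theorem, is that ``cannot be reliably extended'' is inherently a statement about the non-existence of a practical algorithm, not a single Markov chain, so any fully rigorous statement needs a careful choice of what ``reliably'' means. A clean formulation is: for any fixed $\beta\in(0,1)$, $\alpha\in\mathbb{R}$, and insertion distribution $\nu$ of full support on an infinite symmetric cyclically-closed $R$, the mixing time of the ERR chain is infinite in the sense that the total variation distance to the stationary distribution $\pi$ does not tend to $0$ uniformly in the length-truncation parameter $L$; this follows from the waiting-time bound above combined with the observation that $\pi$ assigns positive mass to words whose only derivation from the empty word requires insertion of relators of length $>L$. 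Establishing this uniform non-mixing precisely is the step that requires the most care, whereas the bounds on insertion probability themselves are routine.
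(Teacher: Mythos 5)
Your proposal takes essentially the same route as the paper: the $\beta^{|w'|-|w|}$ factor makes the acceptance probability of an insertion decay exponentially in the relator's length, so within any finite simulation budget the walk effectively runs on the length-truncated sub-presentation, which (as the trivial-group family and the data in Table~\ref{tab:trivial_relator_acceptences} witness) can present a very different, even non-amenable, group. The paper's own justification is exactly this heuristic plus the empirical evidence (it additionally cites an infinitely presented amenable group all of whose finite truncations are non-amenable), so your more quantitative waiting-time phrasing is, if anything, slightly stronger than what the paper offers.
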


This result is not surprising.
In \cite{BenliGrigHarpe} an infinitely presented 
amenable group is given for which any truncated presentation 
(removing all but a finite number of relators) is non-amenable.
The ERR method could not expect to succeed on this group even if 
long relators were sampled often; since the ERR random walk can only be run for a finite time there can 
never be a representative sampling of an infinite set of relators, so  ERR would incorrectly conclude this group is non-amenable.

The pathological presentations of the trivial group
studied here form 
a sequence of presentations for amenable (trivial) groups which 
approach a non-amenable group in the space of marked groups.
The failure  of the ERR method to predict amenability for 
these groups suggests that one does not need 
particularly elaborate or large presentations to produce pathological behaviour.

However,  we remark that this behaviour is easily monitored.  In addition to counting  
 the number of attempted moves of the walk, one should record the relative number of successful insertions of each relator. 
 In the case of Thompson's group $F$ the two relators have similar lengths, and in our experiments both were sampled with comparable frequency.
 
Further analysis of this phenomena appears \cite{CamPhD}.

\subsection{Sub-dominant behaviour in cogrowth.\label{subsec:subdom}}
Recall that the solvable Baumslag-Solitar groups $BS(1,n)=\langle	a,t\mid tat^{-1}a^{-n}\rangle$
are the only two generator, single-relator, amenable groups \cite{OneRelatorAmenable}; for each of these groups $\beta_c=1/3$. 
In \cite{ERR} walks were run on $BS(1,1)=\Z^2,\;BS(1,2)$ and $BS(1,3)$ and for these groups 
the random walk behaved as predicted with divergence occurring at the moment when $\beta$ exceeded $\beta_c$. 
It may be surprising then to see the output of some ERR walks run $BS(1,7)$ shown in Figure \ref{fig:ERR-BS17}.

\begin{figure}
\includegraphics[width=110mm]{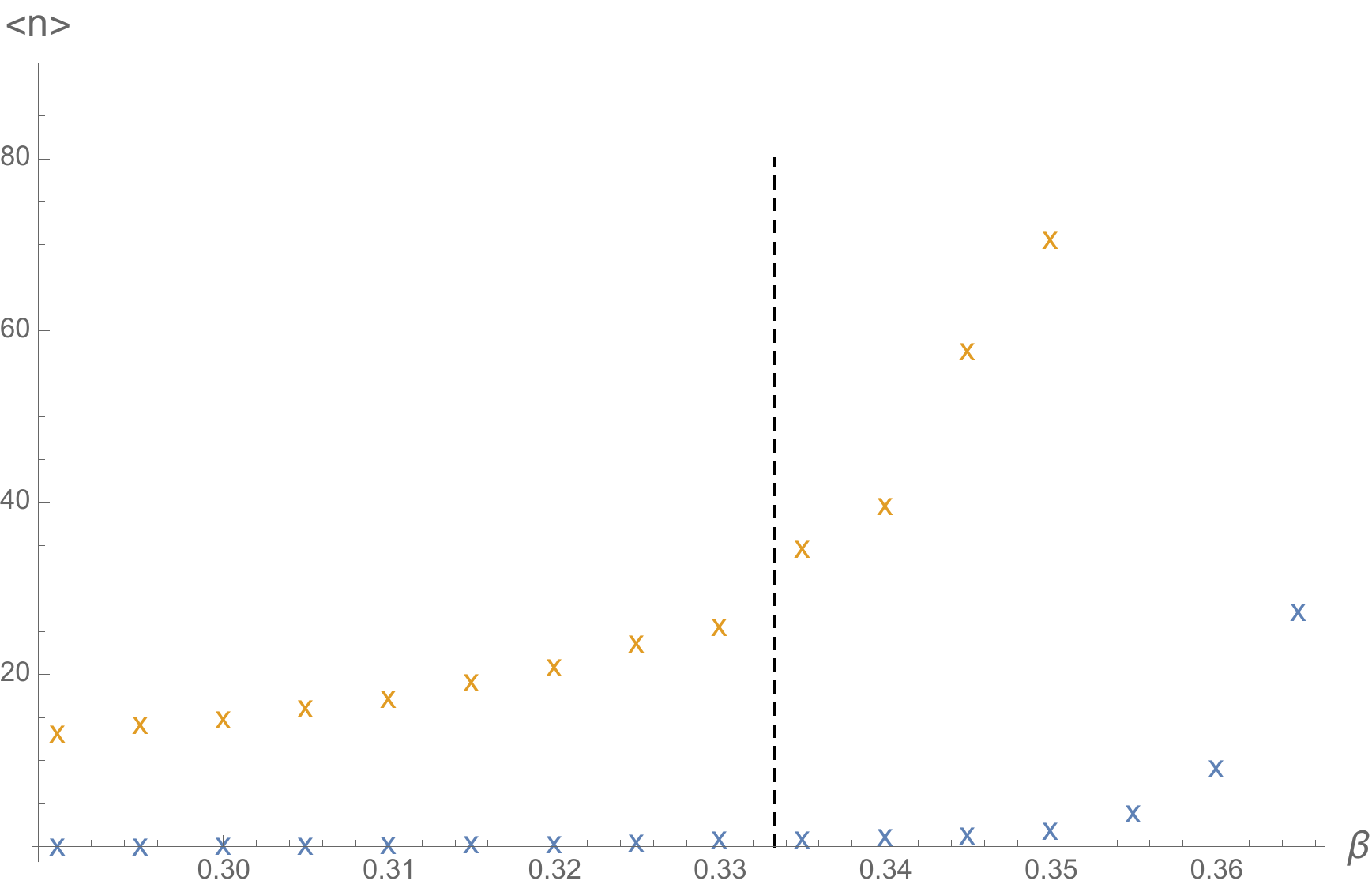} 
\caption{A graph (as in \cite{ERR}), of average word length of ERR random walks plotted against the parameter $\beta$. The orange points come from walks where $\alpha=3$, and the blue points come from walks where $\alpha=0$. The vertical line at $1/3$ marks the expected asymptote.
\label{fig:ERR-BS17}
}
\end{figure}

It is clear that, for this group, the divergence for $\beta>\beta_c$ predicted by the theory is not occurring.  This is further seen in Figure \ref{fig:bs17-distribution}, which shows the progression over time of one of the random walks used to generate Figure \ref{fig:ERR-BS17}.
\begin{figure}
\includegraphics[width=110mm]{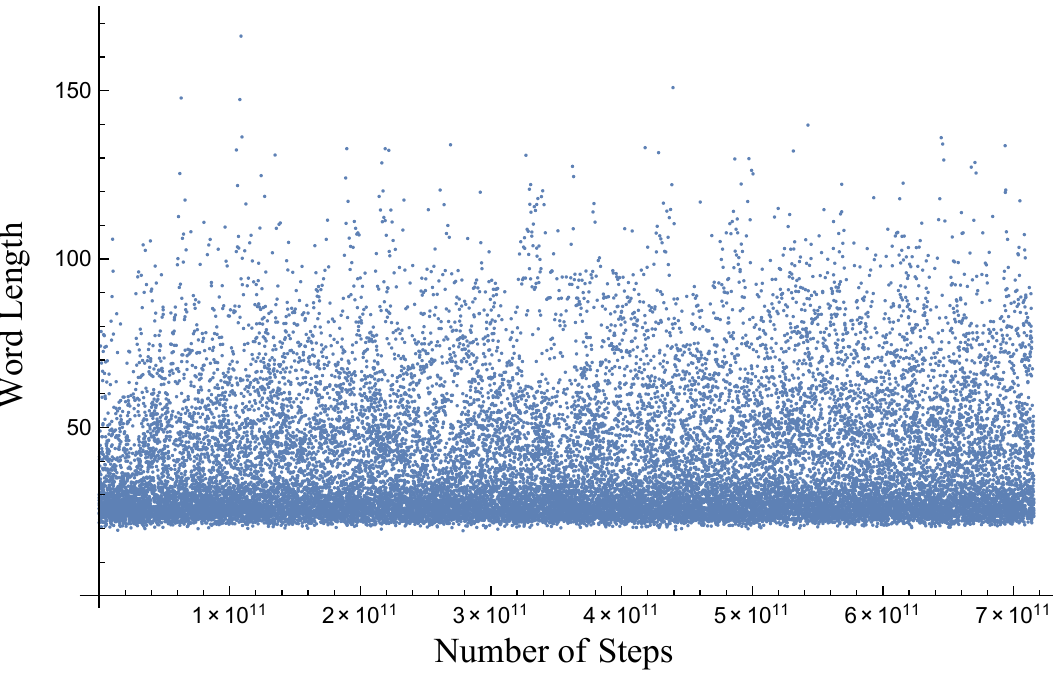} 
\caption{\label{fig:bs17-distribution}
The distribution of ERR random walks on $BS(1,7)$ with $\alpha=3$ and $\beta =0.34$.  
This is a plot of word length against number of steps taken.  The data represents ten ERR random walks overlaid on top of each other.  As can be seen, none of the walks diverged.
Each dot represents the average
word length over 10000 accepted relator insertions.  There is no divergence at this $\beta$ value, even though the group is amenable.
}
\end{figure}
The results in Figure \ref{fig:bs17-distribution} show the 
 word lengths visited for ten ERR random walks (superimposed) performed on $BS(1,7)$, with $\alpha=3$ and $\beta=0.34$.
Since the group has only a single relator, which was successfully inserted into the word 10000 times, it is not an error of the type  identified in Subsection~\ref{subsec:wrong_group}.
The ERR method relies on the divergence of the average word length to identify $\beta_c$, so application of the method in this case will not accurately identify the amenability of 
$BS(1,7)$.

Divergence of the ERR random walk (when $\beta>\beta_c$) relies on the abundance of long trivial words.  For most presentations, at all points in an ERR walk there are always more moves which lengthen the word than shorten it, but the probabilistic selection criteria ensures balance.  More specifically, the parameter $\beta$ imposes a probabilistic barrier which increases exponentially with attempted increase in word length.  
When $\beta >\beta_c$ this exponential cap is insufficient, and the word length diverges.  

Recall that for a given word length $n$ the function $\RR(n)$ quantifies how many reduced-trivial words there are of length similar to $n$.
The results in Table \ref{tab:differentRn} imply that, for many groups, large word lengths must be reached before the asymptotic growth rate is reflected by a local abundance of longer trivial words. 
We have noted in Section \ref{sec:subDomInF} that the 
convergence properties of $BS(1,N)$ in the space of marked groups requires $\RR(n)$ to grow more quickly as $N$ increases.  We now show that the growth rate of 
$\RR(n)$ is sufficient to cause the pathological 
behaviour noted above.

To this end we postulate a hypothetical cogrowth function  for which 
we can explicitly identify and control $\RR(n)$.

\begin{example}
\label{ex:fictional_cogrowth}
Suppose that for some group on two generators and $q>0,\;p\in (0,1)$, the reduced-cogrowth is  known to be exactly $$c_n=3^{n-qn^p}.$$ 
Then $
\limsup_{n\rightarrow\infty} c_n^{1/n}
= 3$
and so the group is amenable. 
It may easily be verfied by the methods outlined in Proposition 
\ref{prop:ProvingRn} that 
$$\RR(n)=\left( 9\log(3)q p2^p   n\right)^{\frac{1}{1-p}}.$$
Note that as $p$ approaches $1$,  the exponent ${\frac{1}{1-p}}$ approaches infinity. This increases both the degree of the polynomial in $n$, and the coefficient $ \left(9\log(3)q p2^p\right)^{\frac{1}{1-p}}$.

Even though we do not know a group presentation with 
precisely this cogrowth function, 
by varying $p$ and $q$ 
this hypothetical example  models  the groups listed in Table~\ref{tab:differentRn}.

Figure \ref{fig:pathological1}
shows the effect of increasing  the parameter $p$ on the ERR random walk 
distribution. Note that this figure is not the output of any computer simulation, rather it models the distributions for an ERR random walk on an amenable group with the hypothetical cogrowth function, for  $\alpha=0,\beta=0.335$ and $q=1$.
\begin{figure}
\includegraphics[width=110mm]{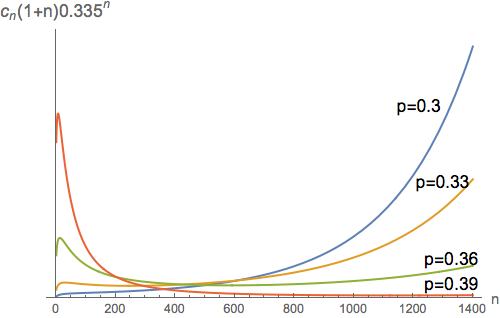}
\caption{
\label{fig:pathological1}
Graphs of $c_n(n+1) 0.335^n$ for $c_n=3^{n-n^p}$. 
}
\end{figure}
Recall that for $\beta<\beta_c$ the theoretical distribution of word lengths visited by the ERR random walk is 
$$\Pr(@n)=\frac{c_n(n+1)^{\alpha+1}\beta^n}{Z}$$ where $Z$ is a normalizing 
constant. 
For $\beta>\beta_c$ the distribution cannot be normalised.  In this case the function $c_n(n+1)^{\alpha+1}\beta^n$ still 
contains information about the behaviour of the walk.  If the 
random walk reaches a word of length $x$ then the relative heights of $c_n(n+1)^{\alpha+1}\beta^n$ either side of $x$ 
describe the relative probabilities of increasing or decreasing
the word length in the next move.

From Figure \ref{fig:pathological1} we see that, for $p=0.3$, 
the slope of $c_n(n+1)^{\alpha+1}\beta^n$ is always positive, so at all word lengths probabilities are uniformly in favour of increasing the word length.
However, as $p$ increases (and the growth rate for $\RR(n)$ increases) a `hump' appears at short word lengths. A random walk for such a group would tend to get stuck in the `hump'.
Indeed, for $p=0.39$ the distribution looks much less like 
a walk diverging towards infinite word lengths and much more like the
distributions for $BS(1,7)$  used to produce Figure~\ref{fig:ERR-BS17}, where the average word length in the ERR walk remained finite.
\end{example}

The distributions in Figure \ref{fig:pathological1} exhibit a 
mechanism which can explain
anomalous behaviour previously observed.
 When $\RR(n)$ increases quickly the ERR random walk may adhere to the behaviour predicted by the theory and simultaneously give anomalous results about the asymptotics of the  cogrowth function.   In this sense if \cite{ERR} contains incorrect answers it is because the original ERR algorithm as it was initially proposed asks the wrong question. The ERR walk does not measure asymptotic properties of the cogrowth function; it provides information about the cogrowth function 
only for word lengths visited by the walk.  This observation forms the basis of Section
\ref{sec:appropriation}.

Note that increasing the parameter $\alpha$  pushes the algorithm towards 
longer word lengths.  Thus, any pathological behaviour caused 
by the growth of $\RR(n)$ could theoretically be 
overcome by increasing $\alpha$.
If $\RR(n)$ is known, then it may be used to calculate
how large words have to get before divergence occurs. A method to do this is outlined by the following example.

Suppose that ERR 
random walks are run on a two generator group with $\beta=0.34$ (as in Figure \ref{fig:bs17-distribution}).  If we eliminate the $\alpha$ term of the stationary distribution (which, being polynomial, becomes insignificant for long word lengths) the divergence 
properties are controlled by the contest between
$0.34^n$ and $c_n$. That is, divergence will occur when 
$c_{2n+2}/c_{2n}>1/0.34^2=3-1/17$; the word length at which divergence will occur is  
$\RR(17)$.   
If this value is known $\alpha$ may be increased 
until the walk visits words of this length.
This process, however, requires specific information about $\RR(n)$ including all scaling constants.  It is hard to imagine a group for which the sub-dominant cogrowth behaviour was known to this level of precision, but dominant cogrowth behaviour (and hence the amenability question for the group) was still unknown.

\subsection{Reliability of the ERR results for Thompson's group $F$}
In Proposition~\ref{prop:connectBStoThompsons} we saw that the $\mathcal{R}$ function for $F$ grows faster than  that of any iterated wreath product of $\Z$'s, and certainly faster than that of any $BS(1,N)$ group.  Since the ERR method fails to predict the amenability of these groups for $N$ as low as $7$, and this behaviour is consistent with the pathological behaviour caused by $\RR$, we conclude that the data encoded in Figure \ref{fig:ERRpaperThompsons} does not imply the non-amenability of $F$, and so the conclusion of the paper \cite{ERR} that $F$ appears to be non-amenable based on this data  is unreliable.

\section{Appropriation of the ERR algorithm \label{sec:appropriation}}

The original implementation of the ERR random walk uses only the 
mean length of words visited in an attempt to estimate asymptotic behaviour of the cogrowth function. 
In this section we show that,
using the full 
distribution of word lengths visited, it is possible to 
estimate specific values
of the cogrowth function.

When doing a long random walk, the probability of arriving at a word of
length $n$ 
can be estimated by multiplying the number of words of that length by the asymptotic probability that the walk ends at a word of this length, $\pi(n)$.
That is, 
\[
\Pr(@n)\approx c_n\pi(n)=c_n\frac{\left(n+1\right)^{\alpha}\beta^{n}}{Z}.
\]
The proportion of the time that the walks spends at words of length
$n$, however, gives us another estimate of $\Pr(@n)$. If we let
$W_n$ be the number of times the walk visits a word of
length $n$ then we have that 
\[
\Pr(@n)\approx\frac{W_n}{Y},
\]
where $Y$ is equal to the length of the walk. From this we obtain
\[
\frac{W_n}{Y}\approx c_n\frac{\left(n+1\right)^{\alpha}\beta^{n}}{Z}.
\]

For two different values, $n$ and $m$, we obtain the result 
\begin{eqnarray*}
\frac{W_m}{W_n} & \approx & \frac{c_m\left(m+1\right)^{\alpha}\beta^{m}}{c_n\left(n+1\right)^{\alpha}\beta^{n}},
\end{eqnarray*}
Thus, 
\begin{equation}
c_m\approx c_n
\frac{W_m}{W_n}
\left(\frac{n+1}{m+1}\right)^{\alpha}\beta^{n-m}.
\label{eqn:cogrowth_estimate}
\end{equation}

Equation~\ref{eqn:cogrowth_estimate} provides a method of estimating the value 
of $c_m$ using some known or previously estimated value of $c_n$ 
and the distribution 
of word lengths visited from an ERR random 
walk.
Let's try  a quick implementation of this for Thompson's group $F$, 
where the first 48 cogrowth terms of which are known \cite{Haagerup}.

We ran an ERR random walk of length exceeding $10^{12}$ steps
on the standard presentation (Equation~\ref{eqn:Fpresentation}) f or $\alpha=3$ and $\beta=0.3$.  The frequency of word 
length visited is shown in Table~\ref{table:singleThompsonsGroupDist}.

\begin{table}
\[\begin{array}{|c|r|}
\hline
n & 	W_n  \\
\hline
0 &  32547326274\\
10 & 56273373521	\\
12 & 	31613690578\\
14 & 	26477475739\\
16 & 	13576713156\\
18 & 	9684082360\\
20 & 	5444250723\\
22 & 	3360907182\\
24 & 	1905434239\\
26 & 	1121735814\\
28 & 	638093341\\
30 & 	367320461\\
32 & 	208025510\\
34 & 	118432982\\
36 & 	65983874\\
38 & 	37210588\\
40 & 	20642387\\
42 & 	11332618\\
44 & 	6243538\\
46 & 3421761\\
48 & 1863477\\
\hline\end{array}\]

\caption{Data collected from an ERR random walk of length $Y=1.8\times 10^{11}$ with  $\alpha=3$ and $\beta=0.3$ on the standard presentation 
for Thompson's group $F$.
\label{table:singleThompsonsGroupDist}
}
\end{table}

\begin{table}[h]
\[\begin{array}{|c|c|c|c|c|c|c|}
\hline
n & 	\text{exact}  & \text{estimate} &  \text{\begin{tabular}{c}percentage\\ error\end{tabular}}\\
\hline
10 & 	20 & 19.9988		 & .006 \\
12 & 	64 & 63.9928		 & .01\\
14 & 	336 & 335.969	 & .01\\
16 & 	1160	 & 1160.23  & .02\\
18 & 	5896	 &  5893.13 & .05\\
20 & 	24652 & 	  24667.2  & .06\\
22 & 	117628	 & 	117588  & .03\\
24 & 	531136	 & 	530650  & .09\\
26 & 	2559552	 & 	2551340 & .3\\
28 & 	12142320	 & 	12116600  & .2\\
30 & 	59416808	 & 	 59353400 & .1\\
32 & 	290915560 & 	290848000 & .02\\
34 & 	1449601452 & 1453990000	  & .3\\
36 & 	7269071976	 & 7206930000  & .8\\
38 & 	36877764000	 & 36583500000 & .8\\
40 & 	1.8848\times 10^{11}	 & 1.8461\times 10^{11}  & 2\\
42 & 	9.7200\times 10^{11}	 &  9.3078\times 10^{11} & 4\\
44 & 	5.0490\times 10^{12}	 &  4.7504\times 10^{12} & 6\\
46 & 	2.6423\times 10^{13}	 &  2.4308\times 10^{13} & 8\\
48 & 	1.3920\times 10^{14}	 &  1.245\times 10^{14} & 10\\
\hline\end{array}\]

\caption{Estimate of the first 48 terms of the cogrowth function
for Thompson's group $F$, constructed from an ERR random walk of $Y=1.8\times 10^{11}$ steps with $\alpha=3$ and $\beta=0.3$. Exact values from \cite{Haagerup}.
\label{tab:unsophisticatedThompsons48}
}
\end{table}

We used Equation~\ref{eqn:cogrowth_estimate} and the data in Table~\ref{table:singleThompsonsGroupDist} to estimate 
$c_{10}$  from $c_0$, and then this estimate was used to estimate
$c_{12}$. (Note that the shortest non-empty trivial words are of length 10.  Since the relators in the standard presentation of $F$ are even in length there are no odd length relators.) 
Using the data and the previous estimate for $c_{n-2}$, estimates were made of the first 48 terms, and these compared to the correct value in 
Table \ref{tab:unsophisticatedThompsons48}.

This implementation of Equation~\ref{eqn:cogrowth_estimate}  may be refined in several ways.  Firstly, in many groups we have exact 
initial values of $c_n$ for more than the trivial result $c_0=1$.  In this 
case these initial values can be used to estimate subsequent terms.  In this paper we are primarily concerned with testing the efficacy of this 
method for determining cogrowth, and so do not make use of such data.

Secondly, in the above implementation the only cogrowth value 
used to estimate $c_n$ was $c_{n-2}$.  Instead, estimates 
for $c_n$ may be made from $c_k$ for any $k<n$.  These estimates
may then be averaged to form an estimate for $c_n$.  Note,
however, that if only one ERR random walk is used, and each of the $c_k$ is itself estimated from previous values of the same distribution there may be issues with interdependence.

This leads naturally to the following refinement --- to obtain several independent estimates for a given cogrowth value
 several ERR random walks can be run with different 
values for the parameters $\alpha $ and $\beta$.

\subsection{The ERR-R algorithm.}
The ERR-R algorithm accepts as input a group presentation and 
the cogrowth value with 
$c_0=1$. As above, recursive application of
Equation~\ref{eqn:cogrowth_estimate} is used to produce estimates for longer
word lengths.  However, in each step previous estimates for a range of $c_n$ are used to produce new estimates.
A detailed analysis of the error incurred with each application of Equation \ref{eqn:cogrowth_estimate} is performed in 
Section \ref{subsec:error_analysis}.  All error bounds which appear in subsequent graphs are constructed using these techniques.  

Unsurprisingly, the errror analysis in Section \ref{subsec:error_analysis} predicts that the largest errors are incurred 
when data is used from the tails of  
random walk distributions.  Ideally then, a separate random walk should be run for each $c_n$, with parameters $\alpha$ and 
$\beta$ chosen so that the sampled word lengths occupy the peaks 
of the distribution. 
 If many estimates are to be made this is computationally infeasible. Instead we performed  ERR random walks
using a range of
$\alpha$ and $\beta$ values, which can be chosen so that all word lengths of interest are visited often.

When estimating $c_m$, one estimate was made from each random walk distribution and from each $c_n,$ $m-100<n<m$. To avoid using the tails of distributions only data points which were greater than 10\% of the max height were used.

Using Equation~\ref{eqn:errorEstimate} each estimate was assigned a weight equal to the inverse of the estimated error.
The final value for $c_m$ was taken as the  weighted average of the estimates, and the error in $c_m$ was taken to be the 
weighted average of the individual error estimates.

Random walk data was obtained as before using the python code of the second author as described in Remark~\ref{rmk:implementation_details}.

\subsection{Application to the examples in Section~\ref{sec:pathological_behaviour}}
Applying the ERR-R algorithm can be used to analyse in more detail the pathological behaviours analysed in this paper.  
Unsurprisingly, for the presentations of the trivial group given in \ref{subsec:wrong_group} which ignore the long relator, the ERR-R estimates for cogrowth values align closely with the three strand braid group.  
For $BS(1,N)$ we can use estimates of initial cogrowth to analyse how $\RR$ increases with $N$.  This is shown, for example in Figure \ref{fig:nthRootBS1N} which exhibits the behaviour predicted by the convergence to $\Z\wr\Z$ in the space of marked groups.
Further analysis of these presentations will appear in \cite{CamPhD}.


\begin{figure}
\includegraphics[width=110mm]{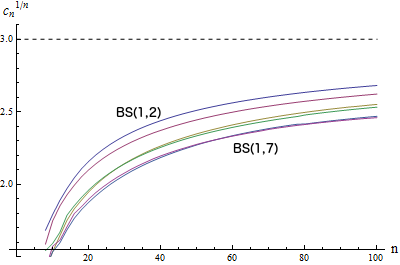} 
\caption{
Estimates for $c_n^{1/n}$ for the groups $BS(1,N)$, $N=2\dots 7$. As $N$ increases the curves takes longer to approach the asymptote.
\label{fig:nthRootBS1N}
}
\end{figure}

\subsection{Application to surface group}
The fundamental group of a surface of genus 2 has presentation $\langle
a,b,c,d\mid [a,b][c,d] 
\rangle$.
The cogrowth of this group has received a lot of attention, and good upper and lower bounds are known for the asymptotic rate of growth \cite{Gouezel,Nag}. 

ERR random walks were run on this surface group with $\alpha=3,\;30,\;300$
and $\beta=0.281,\;0.286,\;0.291,\dots,0.351$.  Estimates were made for $c_n$ as well as the error $\Delta c_n$.
The resultant upper and lower bounds for $c_n^{1/n}$ are shown in 
Figure \ref{fig:nthRootSurface}.

\begin{figure}
\includegraphics[width=110mm]{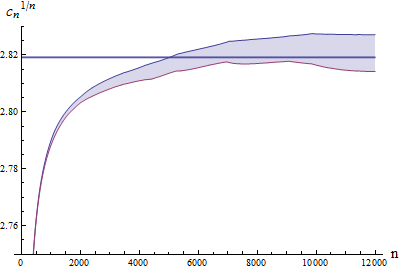} 
\caption{Upper and lower bounds for the $n$-th root of the cogrowth function
for the fundamental group of a surface of genus 2 as calculated from ERR random walks.  The horizontal lines (indistinguishable at this scale) identify the known upper and lower bounds. Note that after 12000 recursive applications of Equation~\ref{eqn:cogrowth_estimate} the error in the $n$-th root is still only approximately 0.01. \label{fig:nthRootSurface}} 
\end{figure}

\subsection{Application to Thompson's group $F$}
We now apply the more sophisticated implementation of the method to $F$. Recall that we can compare the first 48 values with exact values obtained by Haagerup {\em et al.}. Our method allows us to go much further than this though, which we do. 

ERR random walks were run on $F$ with $\alpha=3,13,23,33,53,63$ and $\beta=0.28,0.29,\dots 0.37$.
Collection of experimental data is ongoing.  Table \ref{tab:sophisticatedThompsons48} shows comparisons between estimates for $c_n^{1/n}$ and the actual values, for $n\leq 48$, as well as the estimates for the error obtained from the experimental data.

\begin{table}
\[\begin{array}{|c|r|r|c|c|c|c|}

\hline
n & 	\text{exact}  & \text{estimate} &  \text{\begin{tabular}{c} error\ (\%)\end{tabular}}&  \text{\begin{tabular}{c}predicted\\ error\ (\%)\end{tabular}}\\
\hline
10 & 	20 & 19.9996	 & 0.002 & .03\\
12 & 	64 & 	63.9981 & 0.003 & 0.06\\
14 & 	336 & 	335.999 & 0.0002& 0.07\\
16 & 	1160	 &  1159.96 & 0.003& 0.1\\
18 & 	5896	 &  5895.98 & 0.0003& 0.1\\
20 & 	24652 & 	 24653.1 & 0.005& 0.1\\
22 & 	117628	 & 117625  & 0.003& 0.2\\
24 & 	531136	 &  531098 & 0.007& 0.2\\
26 & 	2559552	 & 2558950	 & 0.02& 0.2\\
28 & 	12142320	 & 	12138200  & 0.03& 0.3\\
30 & 	59416808	 & 	 59408300 & 0.01& 0.3\\
32 & 	290915560 & 	 290861000 & 0.02& 0.3\\
34 & 	1449601452 &	  1449260000 & 0.02& 0.3\\
36 & 	7269071976	 & 7268550000 & 0.007& 0.4\\
38 & 	36877764000	 & 36876700000 & 0.003& 0.5\\
40 & 	1.8848\times 10^{11}	 & 1.88491 \times 10^{11}  & 0.003& 0.5\\
42 & 	9.7200\times 10^{11}	 & 9.7205 \times 10^{11} & 0.005& 0.5\\
44 & 	5.0490\times 10^{12}	 & 5.05097\times 10^{12} & 0.04& 0.6\\
46 & 	2.6423\times 10^{13}	 & 2.64353\times 10^{13} & 0.05& 0.6\\
48 & 	1.3920\times 10^{14}	 & 1.39246\times 10^{14} & 0.03& 0.7\\
\hline\end{array}\]

\caption{Estimate of the first 48 terms of the cogrowth function
for Thompson's group $F$, constructed from 60 ERR random walks. Exact values from \cite{Haagerup}.
\label{tab:sophisticatedThompsons48}
}
\end{table}

\begin{rmk}
Table \ref{tab:sophisticatedThompsons48} shows a marked increase in the  degree of accuracy of the estimates over those of Table \ref{tab:unsophisticatedThompsons48}.  This suggests the method of using multiple distributions and weighted averages is effective. Note that there are approximately $10^{12}$ trivial words of length 48 so the walks could not possibly have visited each one.  The sample of words visited by the walk seem to reflect the space as a whole reasonably accurately.
\end{rmk}

Figure \ref{fig:thompsonsNthRoot}  shows our estimates for upper and lower bounds of $c_n^{1/n}$ for $n\leq 2000$.

\begin{figure}
\includegraphics[width=110mm]{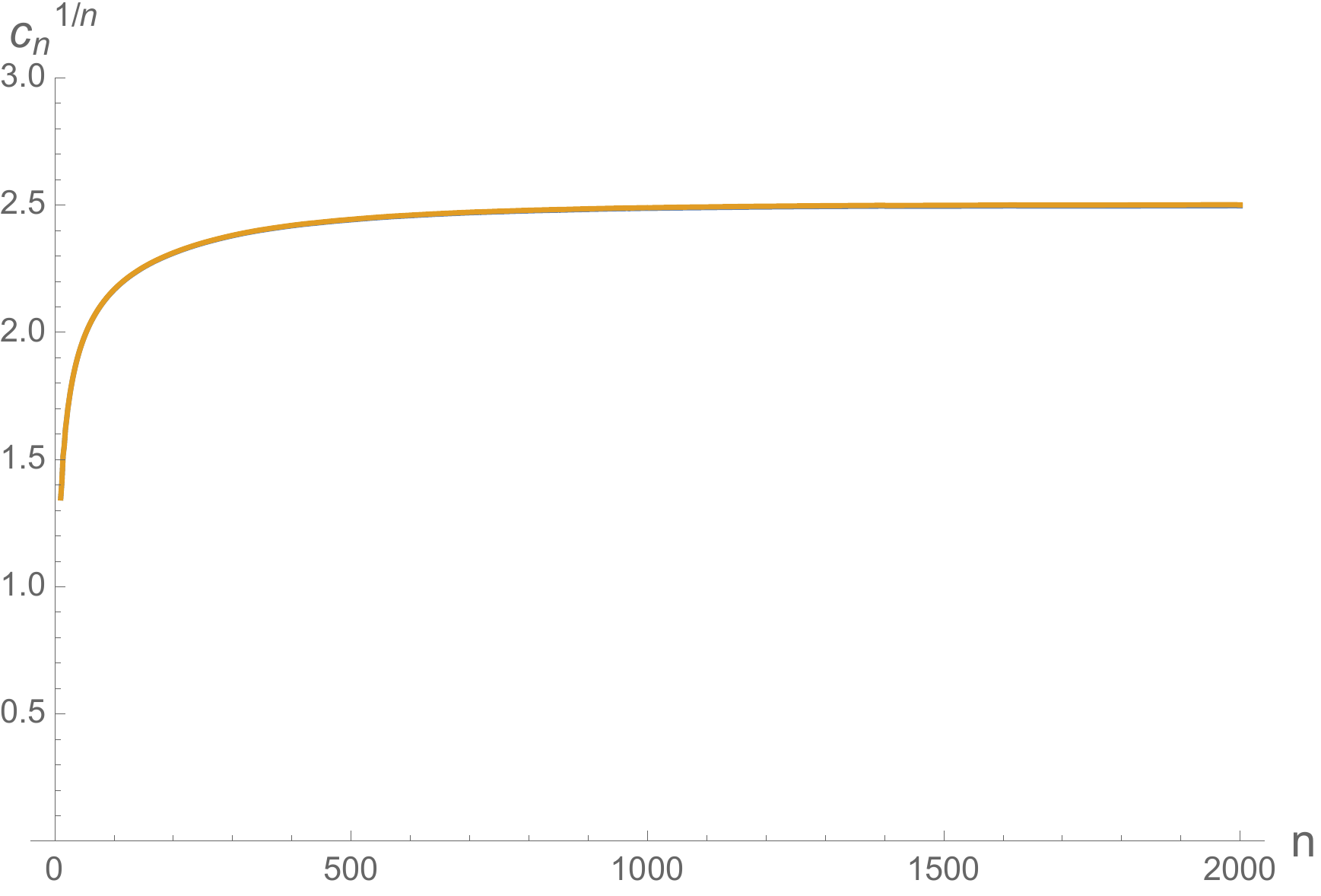} 
\caption{
Estimates of $c_n^{1/n}$ for Thompsons group $F$ for $n\leq 2000$, using the ERR-R method.   
The figure
includes upper and lower bounds, but at this scale the estimated error 
is to small for the bounds to be distinguished.
\label{fig:thompsonsNthRoot}}
\end{figure}

\subsection{Error analysis}\label{subsec:error_analysis}

Here we identify a method by which error in cogrowth estimates my be estimated.  We stress that this is a statistical measurement of error, rather than theoretical.  

Recall Equation~\ref{eqn:cogrowth_estimate}.
Suppose that $c_n$ is known up to $\pm\Delta c_n$,
and that the error in the measurements $W_m$ and $W_n$ are 
$\pm\Delta W_m$ and $\pm\Delta W_n$ respectively.  Then, 
from elementary calculus, the error in 
$c_m$ is given by 

\begin{align}
\nonumber
\Delta c_m
\approx &\frac{W_m}{W_n}
\left(\frac{n+1}{m+1}\right)^{\alpha}\beta^{n-m} \Delta c_n\\
\nonumber
&+ \frac{c_n}{W_n}
\left(\frac{n+1}{m+1}\right)^{\alpha}\beta^{n-m} \Delta W_m\\
\nonumber
&+ c_n\frac{W_m}{W_n^2}
\left(\frac{n+1}{m+1}\right)^{\alpha}\beta^{n-m} \Delta W_n\\
\nonumber
=&c\left(n\right)
\frac{W_m}{W_n}
\left(\frac{n+1}{m+1}\right)^{\alpha}\beta^{n-m}
\left( 
\frac{\Delta c_n}{c_n}
+\frac{\Delta W_m}{W_m}
+\frac{\Delta W_n}{W_n}
\right)\\
\approx&c_m
\left( 
\frac{\Delta c_n}{c_n}
+\frac{\Delta W_m}{W_m}
+\frac{\Delta W_n}{W_n}
\right).\label{eqn:errorEstimate}
\end{align}

Hence the proportional error in the estimate of $c_m$ is 
approximately equal to the sum of the proportional errors in $c_n,\,W_m$ and $W_n$.  It is clear from this that if Equation~\ref{eqn:cogrowth_estimate} is used recursively (building new
estimates based on previously estimated cogrowth values) the proportional 
error in $c_n$ is certain to increase. Note, the factor controlling the rate of growth in the proportional error of 
estimates is the proportional error in $\Delta W_n$.  If this 
is constant as $n$ increases the proportional error in $c_n$ will grow linearly with $n$.

To calculate useful error margins for $c_n$ it is necessary to quantify $\Delta W_n$.  Here we employ the same method 
used in the ERR paper; walks are split into $M$ segments and
the number of times the walk visits words of length $n$ is recorded
for each segment. Let $x_{i,n}$ denote the number of times the walk
visited words of length $n$ in the $i$th segment. Then $W_n$ is taken to be the average of $x_{i,n}$ for $i=1\dots M$ and the error in 
$W_n$ is calculated from the statistical variance of these values,

\begin{equation}\label{eqn:errorInWn}
\Delta W_n=\sqrt{\frac{\var\lbrace x_{i,n}\rbrace_{1\leq i\leq M}}{M-1}}.
\end{equation}

\begin{example}
Equations \ref{eqn:errorEstimate} and \ref{eqn:errorInWn}  were used to produce the estimates of the error in the estimates contained in 
Table~\ref{tab:sophisticatedThompsons48}. 
Note that the estimated error is much larger then the actual error.
\end{example}

\subsection{Error in the $n$-th root of $c_n$}
We have noted that recursive uses of Equation~\ref{eqn:cogrowth_estimate} will result in an increasing proportional error in $c_n$. 
However, it is the $n$-th root of $c_n$ which reflects the amenability of a group.  Let 
$\gamma_n=c_n^{1/n}$ and 
$\Delta \gamma_n$ denote the error of the estimate for $\gamma_n$.
Once again from elementary calculus we obtain that for a 
given $n$

\begin{align}
\nonumber
\Delta \gamma_n
&\approx\frac{1}{n}c_n^{\frac{1}{n}-1}\Delta c_n\\
\nonumber
&=\frac{1}{n}c_n^{\frac{1}{n}}\frac{\Delta c_n}{c_n}\\
\nonumber
&=\gamma_n\frac{1}{n}\frac{\Delta c_n}{c_n}\\
\text{and so }\frac{\Delta \gamma_n}{\gamma_n} &\approx\frac{1}{n}\frac{\Delta c_n}{c_n}.\label{eqn:errorInNthRoot}
\end{align}

Thus, if $\frac{\Delta c_n}{ c_n}$ increases at most linearly, $\frac{\Delta \gamma_n}{\gamma_n}$ can be expected to remain constant.  

The values for $c_n$ grow exponentially, so a linearly increasing proportional error in $c_n$ corresponds with a massive increase in the absolute error in $c_n$.  In contrast, $\gamma_n$ approaches a constant, so the proportional error depends linearly on the absolute error. 
Thus it is not surprising that our  experimental results show that even when the error in 
cogrowth estimates grows large, the error in 
the $n$-th root grows very slowly.

\section{Conclusion}

Several ideas emerge from this study.
Firstly, researchers performing experimental mathematics to determine the amenability of a group need to take care that their algorithm is not susceptible to interference from sub-dominant behaviours. For the reduced-cogrowth function the sub-dominant behaviour is identified by $\RR$.   Amenability is  an asymptotic property, and
the interference of sub-dominant behaviours on experimental algorithms can be subtle and nuanced.  In particular, we have shown that, if Thompson's group $F$ is amenable, its function $\RR$ grows faster than any polynomial.  This implies that the prediction of non-amenability of $F$ in \cite{ERR} is unreliable.

We have also shown that, despite potential inaccuracies in estimates of asymptotics, the ERR-R method can produce accurate results for initial cogrowth values.
These
are interesting in their own right.  Indeed, if Thompson's group is not amenable, then its $\RR$ function need not be super-polynomial and results from experimental methods might well inform the construction of conjectures regarding cogrowth.

In this context the original benefits of the ERR algorithm still stand:
it requires no group theoretic computational software, no solution to the word problem, and remains a computationally inexpensive way to quickly gain insight into the cogrowth function of a finitely presented  group.

\section*{Acknowledgements}

The authors wish to thank  Andrew Rechnitzer and 
Andrew Elvey-Price  
  for helpful feedback on this work.

\bibliography{refs} \bibliographystyle{plain}

\end{document}